\newtheorem{theorem}{Theorem}[section]
\newtheorem{fact}[theorem]{Fact}
\newtheorem{corollary}[theorem]{Corollary}
\newtheorem{lemma}[theorem]{Lemma}
\newtheorem{proposition}[theorem]{Proposition}
\newtheorem{definition}[theorem]{Definition}
\numberwithin{equation}{section}
\theoremstyle{remark}
\newtheorem{remark}[theorem]{Remark}
\newcommand{\ben}{\begin{enumerate}}
	\newcommand{\een}{\end{enumerate}}
\newcommand{\bit}{\begin{itemize}}
	\newcommand{\eit}{\end{itemize}}
\def\R {{\Bbb R}}
\def\Q {{\Bbb Q}}
\def \F {{\Bbb F}}
\def\C {{\Bbb C}}
\def\N{{\Bbb N}}
\def\Z {{\Bbb Z}}
\def\F{{\Bbb F}}
\def\QED{\nobreak\quad\ifmmode\roman{Q.E.D.}\else{\rm Q.E.D.}\fi}
\def\PSL{\operatorname{PSL}}
\def\SL{\operatorname{SL}}
\def\UT{\operatorname{UT}}
\def\STP{\operatorname{ST^+}}
\begin{document}
	
	\title[]{Minimality conditions equivalent  to  the finitude of Fermat  and Mersenne primes}
	\author[]{Menachem Shlossberg}
	\address[M. Shlossberg]
	{\hfill\break School of Computer Science
		\hfill\break Reichman University, 4610101 Herzliya 
		\hfill\break Israel}
	\email{menachem.shlossberg@post.idc.ac.il}
	\subjclass[2020]{11A41, 11Sxx, 20H20, 54H11,  54H13}  
	
	\keywords{Fermat primes, Fermat numbers, Mersenne primes, minimal  group,  special linear group,  Gaussian rational field.}
	\maketitle
	
	\setcounter{tocdepth}{1}
	\begin{abstract}
It is still open whether there exist infinitely many  Fermat primes or infinitely many  composite Fermat  numbers. The same question concerning the Mersenne numbers is  also unsolved. Extending some results from \cite{MS}, we
characterize  the Fermat primes and the Mersenne primes in terms of  topological minimality of some matrix groups.
This is done by showing, among other things,  that if  $\F$ is a  subfield of a local field of characteristic $\neq 2$, then   the special upper triangular group $\STP(n,\F)$ is minimal precisely when  the special linear group $\SL(n,\F)$ is.  We provide criteria for the minimality (and total minimality) of $\SL(n,\F)$ and  $\STP(n,\F),$     where  $\F$ is a  subfield of $\C.$   

Let $\mathcal F_\pi$ and  $\mathcal F_c $ be the set  of Fermat primes and the set of  composite Fermat numbers, respectively. As our main result, we prove that the following conditions are equivalent for   $\mathcal A\in\{\mathcal F_\pi, \mathcal F_c\}:$
	\bit
	 \item  $\mathcal A$   is finite;
		\item  $\prod_{F_n\in \mathcal A}\SL(F_n-1,\Q(i))$ is minimal,   where $\Q(i)$ is the Gaussian rational field;
		\item  $\prod_{F_n\in \mathcal A}\STP(F_n-1,\Q(i))$ is minimal.
		\eit 
	Similarly, denote by $\mathcal M_\pi$ and  $\mathcal M_c $  the set  of   Mersenne primes and the set of  composite Mersenne numbers, respectively, and let $\mathcal B\in\{ \mathcal M_\pi, \mathcal M_c\}.$ Then the following conditions are equivalent:
	\bit
		 \item  $\mathcal B$   is finite;
		\item $\prod_{M_p\in \mathcal B}\SL(M_p+1,\Q(i))$ is minimal;
\item $\prod_{M_p\in \mathcal B}\STP(M_p+1,\Q(i))$ is minimal.
\eit
		
	\end{abstract}

	\section{Introduction}
	A Fermat number has the form $F_n=2^{2^n} +1,$ where $n$ is a non-negative integer while a 
	Mersenne number has the form  $M_p=2^p-1$ for some prime $p.$ Note that $2^n-1$ is composite when $n$ is composite. In other words, a
	Mersenne prime is a prime number that is one less than a power of two.
	There are several  open problems concerning these numbers (e.g., see \cite{G}).   For example, it is still unknown whether the Fermat primes,  the composite Fermat numbers, the Mersenne primes  or the composite Mersenne numbers are infinitely many. 

	
	 All topological groups in this paper are Hausdorff. Let $\F$ be a topological subfield of a local field. Recall that a \emph{local field} is a non-discrete locally compact topological field. 
	 Denote by $\SL(n,\F)$  the special linear group  over  $\F$ of degree $n$  equipped with the 
	  pointwise topology 
	inherited from $\F^{n^2},$ and by $\STP(n,\F)$ its topological subgroup  consisting of upper triangular matrices. 
Megrelishvili and the author  characterized the  Fermat primes in terms of the topological minimality of some special linear groups. Recall that a  
topological group $G$ is  \emph{minimal} \cite{Doitch,S71} if 
every continuous isomorphism $f \colon G\to H$, with $H$ 
a topological group, is a topological isomorphism 	(equivalently, if $G$ does not admit a strictly coarser group topology).  \begin{theorem} \cite[Theorem 5.5]{MS}\label{thm:fermat}
		For an odd prime  $p$ the following conditions are equivalent: \ben\item $p$ is a  Fermat prime;
		\item $\SL(p-1,(\Q,\tau_p))$ is minimal, where $(\Q,\tau_p)$ is the field of rationals equipped with the $p$-adic topology;
		\item  $\SL(p-1,\Q(i))$ is minimal, where $\Q(i) \subset \C$ is the Gaussian rational field. 
		\een  
	\end{theorem}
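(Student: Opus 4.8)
The plan is to reduce all three conditions to the single arithmetic condition that $n:=p-1$ is a power of $2$, and then to analyze the two minimality statements in parallel by realizing each group as a dense subgroup of a completion and applying the Minimality Criterion. The number-theoretic reduction is elementary: for an odd prime $p$, $p$ is a Fermat prime if and only if $p-1$ is a power of $2$ (if $p-1=2^k$ with $k$ having an odd factor $b>1$, then $2^{k/b}+1$ properly divides $p$, a contradiction; conversely a Fermat prime has $p-1=2^{2^m}$). Hence condition (1) becomes ``$n$ is a $2$-power,'' and the whole task is to show that each of (2) and (3) is equivalent to this.

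For the two minimality statements I would argue uniformly. Write $\widehat{\F}$ for the completion of the topological field $\F$: the completion of $(\Q,\tau_p)$ is the $p$-adic field $\Q_p$, and the completion of $\Q(i)\subset\C$ is $\C$. In each case $\SL(n,\F)$ is a dense subgroup of $\SL(n,\widehat{\F})$ (the scheme $\SL_n$ is cut out by polynomials and $\F$ is dense in $\widehat{\F}$), and $\SL(n,\widehat{\F})$ is its group completion. By the Minimality Criterion, $\SL(n,\F)$ is minimal if and only if $\SL(n,\widehat{\F})$ is minimal and $\SL(n,\F)$ is \emph{essential} in $\SL(n,\widehat{\F})$, i.e.\ meets every nontrivial closed normal subgroup of $\SL(n,\widehat{\F})$ nontrivially.

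The decisive structural input is that $\PSL(n,\widehat{\F})$ is topologically simple for $\widehat{\F}\in\{\Q_p,\C\}$, together with perfectness of $\SL(n,\widehat{\F})$; this forces the nontrivial closed normal subgroups of $\SL(n,\widehat{\F})$ other than the whole group to be exactly the nontrivial subgroups of the center $Z(\SL(n,\widehat{\F}))=\{\lambda I:\lambda^n=1\}=\mu_n(\widehat{\F})$, the $n$-th roots of unity. Since $Z(\SL(n,\widehat{\F}))\cap\SL(n,\F)=\mu_n(\F)$, essentiality reduces to the purely algebraic demand that $\mu_n(\F)$ contain every minimal subgroup (the socle) of the finite cyclic group $\mu_n(\widehat{\F})$. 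I would then compute the roots of unity: for odd $p$, $\mu_{p-1}(\Q_p)$ is cyclic of order $p-1$ while $\mu_{p-1}(\Q)=\{\pm1\}$; and $\mu_n(\C)$ is cyclic of order $n$ while $\mu_n(\Q(i))=\mu_{\gcd(n,4)}$, a $2$-group. In both settings the socle of the ambient center is $\prod_{q\mid n}C_q$ over the primes $q\mid n$, and it sits inside the $2$-group $\mu_n(\F)$ precisely when $n$ has no odd prime divisor, i.e.\ when $n$ is a power of $2$. This delivers (1)$\Leftrightarrow$(2) and (1)$\Leftrightarrow$(3) at once.

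The main obstacle is supplying the two ambient inputs rather than the arithmetic. One must establish that $\SL(n,\Q_p)$ and $\SL(n,\C)$ are themselves minimal---nontrivial, since non-compact locally compact groups need not be minimal, so this genuinely uses the semisimple structure---and must justify that their closed normal subgroups are central via topological simplicity of $\PSL(n,\cdot)$. Once these are in place, the density assertion and the socle computation are routine, and the equivalences follow mechanically.
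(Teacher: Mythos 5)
Your proposal is correct and follows essentially the same route as the paper's source for this result: the statement is quoted from \cite{MS}, whose proof likewise realizes $\SL(p-1,\F)$ as a dense subgroup of $\SL(p-1,\widehat{\F})$ for $\widehat{\F}\in\{\Q_p,\C\}$, applies the Minimality Criterion together with the fact that nontrivial (closed) normal subgroups of $\SL(n,\widehat{\F})$ are central or everything, and thereby reduces both minimality statements to the roots-of-unity condition recorded in this paper as Proposition \ref{pro:minsl}, which holds precisely when $p-1$ is a power of $2$. Your socle formulation of essentiality is just an equivalent rephrasing of the condition ``$1\neq\lambda\in\mu_n(\widehat{\F})$ implies $1\neq\lambda^k\in\F$ for some $k$'' used there, so the two arguments coincide in substance.
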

A  similar characterization	for the Mersenne primes is provided in Theorem \ref{thm:charofmer}.
Note that	it follows from 
Gauss--Wantzel Theorem  that an odd prime $p$ is a Fermat prime if and only if a $p$-sided regular polygon can be constructed with  compass and  straightedge.

 We prove in Theorem \ref{thm:minifmin} that if  $\F$ is a  subfield of a local field of characteristic distinct than $2$, then   the special upper triangular group $\STP(n,\F)$ is minimal if and only if  the special linear group $\SL(n,\F)$ is minimal. This result with some other tools yield criteria for the minimality (and total minimality) of $\SL(n,\F)$ and  $\STP(n,\F),$     where  $\F$ is a  subfield of $\C$ (see Proposition \ref{ex:slnotmin}, Remark \ref{rem:last} and  Corollary \ref{cor:alsostp}).
 
As a main result,  we prove in Theorem \ref{thm:main} that the finitude of  Fermat and Mersenne primes as well as the finitude of  composite  Fermat and Mersenne numbers is equivalent to the minimality of some topological products of some matrix groups.
%
	
\section{Minimality of $\STP(n,\F)$ and $\SL(n,\F)$}
	Let $\mathsf{N}:=\UT(n,\F)$ and $\mathsf{A}$ be the subgroups of $\STP(n,\F)$     consisting of upper unitriangular matrices and diagonal matrices, respectively.  Note that $\mathsf{N}$ is normal in $\STP(n,\F)$ and  $\STP(n,\F)\cong \mathsf{N} \rtimes_{\alpha} \mathsf{A}$, where  $\alpha$ is the action by conjugations. It is known that  $\mathsf{N}$ is the derived subgroup of  $\STP(n,\F).$ 
Recall also that $\SL(n,\F)$ has finite center (e.g., see \cite[3.2.6]{RO}) 
\[Z(\SL(n,\F))=\{\lambda I_n:\lambda\in \mu_n\},\] where $\mu_n$ is a finite group  consisting of the $n$-th roots of unity in $\F$ and $I_n$ is the identity matrix of size $n.$ 

\begin{lemma}\label{lemma:samec}
	Let $\F$ be a field and $n\in \N.$ Then 
	$Z(\STP(n,\F))=Z(\SL(n,\F))$ and $\STP(n,\F)/Z(\STP(n,\F))$ is center-free.
\end{lemma}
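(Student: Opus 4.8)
The plan is to establish the nontrivial inclusion $Z(\STP(n,\F)) \subseteq Z(\SL(n,\F))$ — the reverse inclusion being immediate, since any scalar matrix $\lambda I_n$ with $\lambda \in \mu_n$ lies in $\STP(n,\F)$ and commutes with every element of $\GL(n,\F)$, hence with every element of $\STP(n,\F)$ — and then to deduce the center-free assertion almost for free from the semidirect decomposition $\STP(n,\F) \cong \mathsf N \rtimes_\alpha \mathsf A$.

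For the inclusion, let $M=(m_{kl}) \in Z(\STP(n,\F))$; it is upper triangular with $\det M = 1$. First I would exploit the diagonal subgroup $\mathsf A$: for $D=\mathrm{diag}(d_1,\dots,d_n)\in\mathsf A$ the identity $MD=DM$ reads $m_{kl}(d_k-d_l)=0$ for all $k,l$, so every off-diagonal entry $m_{kl}$ with $d_k\neq d_l$ vanishes. Choosing for each pair $k<l$ a determinant-one diagonal matrix whose $k$-th and $l$-th entries differ (e.g. $d_k=a$, $d_l=a\obr$, the remaining entries $1$, for a suitable $a$) forces $M$ to be diagonal. I would then commute $M$ with the unitriangular elementary matrices $I_n+e_{k,k+1}\in\mathsf N$: the relation $Me_{k,k+1}=e_{k,k+1}M$ yields $m_{kk}=m_{k+1,k+1}$, so all diagonal entries coincide and $M=\lambda I_n$. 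The constraint $\det M=\lambda^n=1$ then places $\lambda\in\mu_n$, whence $M\in Z(\SL(n,\F))$.

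For the second statement it suffices to show that the second center $Z_2:=\{g\in\STP(n,\F):[g,x]\in Z(\STP(n,\F))\text{ for all }x\}$ coincides with $Z(\STP(n,\F))$. Here I would invoke two facts: every commutator $[g,x]$ lies in the derived subgroup, which is $\mathsf N=\UT(n,\F)$; and, by the first part, $Z(\STP(n,\F))$ consists of scalar matrices, so $Z(\STP(n,\F))\cap\mathsf N=\{I_n\}$ (a unitriangular scalar matrix is $I_n$). Hence for $g\in Z_2$ we obtain $[g,x]\in\mathsf N\cap Z(\STP(n,\F))=\{I_n\}$ for every $x$, i.e. $g$ is central. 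Thus $Z_2=Z(\STP(n,\F))$ and the quotient is center-free.

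The main obstacle is the reduction of a central element to a scalar, specifically the diagonalizing step: it requires the torus $\mathsf A$ to be rich enough to separate every pair of indices, which uses that $\F$ contains an element $a$ with $a^2\neq 1$. This is automatic for the fields relevant to the paper (subfields of $\C$, or of a local field of characteristic $\neq 2$, all of which are infinite), and I would either make such a hypothesis explicit or record that the index-separating family of diagonal matrices is precisely what drives the argument.
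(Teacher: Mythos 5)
Your proof is correct and follows essentially the same route as the paper's: off-diagonal entries are killed by commuting with determinant-one diagonal matrices, the diagonal part is then forced to be scalar by commuting with unitriangular elementary matrices, and the center-free claim follows from the fact that commutators of upper triangular matrices lie in $\UT(n,\F)$ while the center consists of scalars. The differences are cosmetic --- the paper phrases the computation inside the semidirect product $\mathsf{N}\rtimes\mathsf{A}$ and uses a single diagonal matrix with pairwise distinct entries instead of your pair-separating family, and its center-free step compares diagonal components rather than invoking the second center --- and your closing caveat about needing an element $a$ with $a^2\neq 1$ is well taken, since the paper's own proof tacitly makes an analogous largeness assumption on $\F$.
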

\begin{proof}
 Let $(C,D)\in Z(\mathsf{N} \rtimes \mathsf{A})$ and $E\in \mathsf{A}$ such that
	$e_{ii}\neq e_{jj}$ whenever $i\neq j.$  For every $i>j$ it holds that 
	\[(CE)_{ij}=\sum_{t=1}^n c_{it}e_{tj}=c_{ij}e_{jj}\]
	and \[(EC)_{ij}=\sum_{t=1}^n e_{it}c_{tj}=c_{ij}e_{ii}\]
	Then, the equality $(C,D)(I,E)=(I,E)(C,D)$ implies that $CE=EC$ and $c_{ij}e_{jj}=c_{ij}e_{ii}$. As $e_{ii}\neq e_{jj}$ we deduce that $c_{ij}=0.$ Since 
	$C$ is an upper unitriangular matrix it follows that $C=I.$ To prove that $Z(\STP(n,\F))=Z(\SL(n,\F))$ it suffices to show that the diagonal matrix $D$ is  scalar. To this aim, pick distinct indices $i,j$ and a matrix $F\in \mathsf{N}$ such that $f_{ij}\neq 0.$ As $(I,D)\in Z(\mathsf{N} \rtimes \mathsf{A}),$ it follows that $(I,D)(F,I)=(F,I)(I,D).$ This implies that
	$DF=FD$ and, in particular, $(DF)_{ij}=(FD)_{ij}.$ This yields the equality $d_{ii}f_{ij}=d_{jj}f_{ij}$ since $D$ is diagonal. We conclude that $d_{ii}=d_{jj}$ in view of the inequality  $f_{ij}\neq 0.$ This proves that  $Z(\STP(n,\F))=Z(\SL(n,\F)).$\\
	Now, let $(B,D)Z(\mathsf{N} \rtimes \mathsf{A})\in Z(\mathsf{N} \rtimes \mathsf{A}/ Z(\mathsf{N} \rtimes \mathsf{A}))$ and  $(C,E)\in \mathsf{N} \rtimes \mathsf{A}.$ By what we proved,  there exists a scalar $\lambda\in \F$ such that $(B,D)(C,E)=(C,E)(B,D)(I,\lambda I).$
	Therefore, $DE=\lambda DE$ and $\lambda=1.$ This means that $(B,D)(C,E)=(C,E)(B,D)$ for every $(C,E)\in \mathsf{N} \rtimes \mathsf{A}.$ Therefore, $(\mathsf{N} \rtimes \mathsf{A})/ Z(\mathsf{N} \rtimes \mathsf{A})$ and its isomorphic copy $\STP(n,\F)/Z(\STP(n,\F))$ are center-free.
\end{proof}
The following lemma will be useful in proving Theorem \ref{thm:minifmin}.
\begin{lemma}\label{lem:ngeq3}
	Let $\F$ be a subfield of a  field $H$ and	let $n\geq 3$ be a natural number. If $L$ is a normal subgroup of  $\STP(n,H)$ that intersects $\UT(n,H)$ non-trivially, then it intersects $\UT(n,\F)$ non-trivially.
\end{lemma}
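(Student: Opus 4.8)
The plan is to start from a non-trivial $u\in L\cap\UT(n,H)$ and, using only operations that keep us inside $L$ — namely conjugation by elements of $\STP(n,H)$ and the commutators $[u,g]=ugu^{-1}g^{-1}$ they produce, all of which lie in $L$ because $L$ is normal — to manufacture the single elementary matrix $I+E_{1n}$, whose entries $0$ and $1$ already lie in $\F$. Thus the whole argument splits into a ``push to the corner'' step followed by a ``rescale the corner entry'' step.

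First I would drive an off-diagonal entry of $u$ into the top-right corner. Write $u=I+N$ with $N\neq 0$ strictly upper triangular, and let $d\geq 1$ be the smallest integer for which $N$ has a non-zero entry on the $d$-th superdiagonal; fix a pair $(i,j)$ with $j=i+d$ and $N_{ij}=c\neq 0$. If $j<n$ I replace $u$ by the commutator $[u,\,I+E_{j,j+1}]$; since the lowest-degree term of a group commutator of unipotent matrices is the Lie bracket of the lowest-degree terms, its lowest non-vanishing superdiagonal is the $(d+1)$-st, and the entry in position $(i,j+1)$ equals $(NE_{j,j+1}-E_{j,j+1}N)_{i,j+1}=N_{ij}=c\neq 0$. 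If instead $j=n$ (so necessarily $i>1$) I use $[\,I+E_{i-1,i},\,u\,]$ to place $c$ in position $(i-1,n)$, again on the $(d+1)$-st superdiagonal. Each step stays inside $L\cap\UT(n,H)$, remains non-trivial, and raises the index of the lowest non-zero superdiagonal by one, so after finitely many steps I reach a non-trivial element whose lowest non-zero superdiagonal is the $(n-1)$-st. As $(1,n)$ is the only entry on that superdiagonal, this element is exactly $I+c'E_{1n}$ for some $c'\in H\setminus\{0\}$.

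It then remains to replace $c'\in H$ by $1\in\F$. The set $S=\{t\in H : I+tE_{1n}\in L\}$ is an additive subgroup of $H$, because $(I+sE_{1n})(I+tE_{1n})=I+(s+t)E_{1n}$, and it is non-zero by the previous step. Moreover $S$ is closed under multiplication by every $s\in H^{\times}$: conjugating $I+tE_{1n}$ by the determinant-one diagonal matrix $\operatorname{diag}(s,s^{-1},1,\dots,1)\in\STP(n,H)$ multiplies the $(1,n)$-entry by $D_{11}/D_{nn}=s$, so $st\in S$. Here the hypothesis $n\geq 3$ enters decisively, since it guarantees $D_{nn}=1$ rather than $s^{-1}$, so that the scaling factor $s$ ranges over all of $H^{\times}$. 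Consequently $S\supseteq c'H^{\times}=H\setminus\{0\}$, whence $S=H$; in particular $1\in S$, i.e. $I+E_{1n}\in L$. As $I+E_{1n}$ is a non-trivial element of $\UT(n,\F)$, the subgroup $L$ meets $\UT(n,\F)$ non-trivially.

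The main obstacle is the bookkeeping of the first step: one must verify that the commutator with an elementary unitriangular matrix really vanishes on all superdiagonals below the $(d+1)$-st while being non-zero in the intended corner-ward position, uniformly as the leading entry is driven from $(i,j)$ toward $(1,n)$, together with the boundary cases $j=n$ and $i=1$. By contrast the use of $n\geq 3$ is concentrated entirely in the final rescaling, where it is precisely what lets the ratio of the first and last diagonal entries of a determinant-one diagonal matrix be an arbitrary element of $H^{\times}$ — which is exactly what fails for $n=2$, where only squares are available.
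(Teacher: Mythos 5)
Your proof is correct, and its overall skeleton coincides with the paper's: both arguments first produce a non-trivial corner matrix $I+c'E_{1n}$ in $L$ (that is, an element of $L\cap Z(\UT(n,H))$) and then normalize the corner entry to $1$ by conjugating with a determinant-one diagonal matrix, which is exactly where $n\geq 3$ enters in both. The difference is in how each step is justified. The paper obtains the corner element in one line by citing the standard fact that a non-trivial normal subgroup of a nilpotent group meets its center non-trivially, together with the known description $Z(\UT(n,H))=\{I+bE_{1n}: b\in H\}$; your superdiagonal commutator descent is in effect a self-contained, constructive proof of that fact specialized to $\UT(n,H)$, and your bookkeeping (the Lie-bracket leading term, the entry computations in the two boundary cases) checks out. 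For the rescaling, the paper conjugates once by $\operatorname{diag}(b^{-1},b,1,\dots,1)$, whereas you introduce the set $S$ and prove $S=H$; this is more than needed, since taking $s=(c')^{-1}$ in your own scaling step already yields $I+E_{1n}\in L$ directly. What your version buys is independence from the cited group-theoretic fact (and it makes visible why the descent terminates at the corner); what the paper's version buys is brevity.
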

\begin{proof}
	Since $L\cap \UT(n,H) $  is a non-trivial normal subgroup of the nilpotent group $\UT(n,H)$  it must non-trivially intersects the center $Z(\UT(n,H)). $  
	Then there exists \[I\neq B=\left(\begin{array}{ccccc} 
		1 & 0 & \ldots &  0 &	b \\
		0 & 1 & \ddots  & \vdots &0\\
		\vdots & \ddots & \ddots &\ddots & \vdots \\
		\vdots & \ldots & 0 & 1 &0\\
		0 & \ldots & \ldots & 0 & 1
	\end{array} \right)\in L\cap Z(\UT(n,H)) \] for some $b\in H$  (see \cite[p. 94]{SU} for example).   Since $n\geq 3$ there exists a diagonal matrix $D\in \STP(n,H)$  such that $d_{11}=b^{-1}$ and $d_{nn}=1$. This implies that \[I\neq DBD^{-1}\in  Z(\UT(n,\F)). \qedhere\] 
\end{proof}
\begin{definition}
	Let $H$ be a subgroup of a topological group $G$. Then $H$ is   essential in $G$ if $H\cap L \neq \{e\}$ 
	for every non-trivial closed normal subgroup $L$ of $G$. 
\end{definition}
The following minimality criterion of dense subgroups is well-known (for compact $G$ see also \cite{P,S71}).
\begin{fact} \label{Crit} \cite[Minimality Criterion]{B}  
	Let $H$ be a dense subgroup of a topological group $G.$  Then $H$ is minimal if and only if $G$ is minimal and $H$ is essential in $G$. 
\end{fact}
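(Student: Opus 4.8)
The plan is to prove both implications by manipulating coarser Hausdorff group topologies, using as the main engine the Raikov completion together with the fact that a continuous homomorphism from a dense subgroup into a complete group extends uniquely to the whole completion. Throughout, write $\tau$ for the topology of $G$ and $\tau_H$ for its restriction to $H$, and recall that a Hausdorff group $K$ is minimal precisely when it admits no strictly coarser Hausdorff group topology. Both directions reduce to producing, or ruling out, such a strictly coarser topology.

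For sufficiency, assume $G$ is minimal and $H$ is essential in $G$, and let $\sigma\subseteq\tau_H$ be any coarser Hausdorff group topology on $H$; the goal is $\sigma=\tau_H$. I would pass to the completion $\widehat{H}_\sigma$ of $(H,\sigma)$ and consider the map $i\colon (H,\tau_H)\to\widehat{H}_\sigma$ obtained by composing the continuous identity $(H,\tau_H)\to(H,\sigma)$ with the dense inclusion. Since $H$ is $\tau$-dense in $G$, the completion of $(H,\tau_H)$ is $\widehat{G}$, so $i$ extends to a continuous homomorphism $\widehat{G}\to\widehat{H}_\sigma$; let $f$ be its restriction to $G$. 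Pulling back the topology of $\widehat{H}_\sigma$ along $f$ yields a group topology $\sigma^*\subseteq\tau$ on $G$ with $\sigma^*|_H=\sigma$. The crucial point is that $\sigma^*$ is Hausdorff: it is Hausdorff iff $f$ is injective, i.e. iff the closed normal subgroup $\ker f$ is trivial; but $\ker f\cap H=\ker(f|_H)=\{e\}$ because $f|_H=i$ is injective, so essentiality of $H$ forces $\ker f=\{e\}$. Minimality of $G$ then gives $\sigma^*=\tau$, whence $\sigma=\sigma^*|_H=\tau_H$, as required.

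For necessity, assume $H$ is minimal. To see that $H$ is essential, suppose to the contrary that some non-trivial closed normal subgroup $L\trianglelefteq G$ satisfies $L\cap H=\{e\}$, and pull back the (Hausdorff) quotient topology of $G/L$ along the injective continuous homomorphism $q|_H\colon H\to G/L$ to obtain a coarser Hausdorff topology $\sigma$ on $H$. Picking $\ell\in L\setminus\{e\}$ and a net in $H$ converging $\tau$-to $\ell$, its image converges to $e$ in $G/L$, so the net converges $\sigma$-to $e$ while, by uniqueness of $\tau$-limits, it cannot converge $\tau_H$-to $e$; hence $\sigma\subsetneq\tau_H$, contradicting minimality of $H$. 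To see that $G$ is minimal, let $\sigma'\subseteq\tau$ be a coarser Hausdorff group topology on $G$; restricting to the dense subgroup $H$ and invoking minimality of $H$ gives $\sigma'|_H=\tau_H$. Since $(H,\tau_H)=(H,\sigma'|_H)$ has a unique completion and $H$ is dense in both $(G,\tau)$ and $(G,\sigma')$, the identity $(G,\tau)\to(G,\sigma')$ extends to the canonical topological isomorphism of completions that restricts to the identity on $H$; as this extension then fixes $G$ pointwise, the subspace topologies inherited by $G$ coincide, forcing $\sigma'=\tau$.

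The main obstacle is the Hausdorffness step in the sufficiency argument: everything hinges on recognizing $\ker f$ as the unique obstruction to $\sigma^*$ being Hausdorff and then eliminating it via essentiality, since $\ker f$ is precisely a closed normal subgroup meeting $H$ trivially. A secondary subtlety, in the necessity argument for minimality of $G$, is that two comparable group topologies agreeing on a dense subgroup need not coincide in general; this is exactly why the completion and the uniqueness of the extension of the identity map are needed, rather than a naive comparison of neighborhood bases.
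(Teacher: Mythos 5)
The paper does not prove this statement at all: it is quoted as a known Fact from Banaschewski \cite{B} (see also \cite{DPS89}), so there is no internal proof to compare your attempt against. Judged on its own merits, your proof is correct. The sufficiency direction --- extending $i\colon (H,\tau_H)\to \widehat{H}_\sigma$ over $\widehat{G}=\widehat{(H,\tau_H)}$, pulling back the topology of $\widehat{H}_\sigma$ along $f=\widehat{i}\,|_G$ to get a coarser group topology $\sigma^*$ on $G$ with $\sigma^*|_H=\sigma$, and using essentiality to force $\ker f=\{e\}$ so that $\sigma^*$ is Hausdorff --- is the standard completion-based proof of the criterion, and both necessity arguments (the quotient by $L$ for essentiality, and restriction plus uniqueness of completions for minimality of $G$) are sound. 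One correction to your closing commentary: the claim that ``two comparable group topologies agreeing on a dense subgroup need not coincide'' is false in the situation at hand, where $H$ is dense in the \emph{finer} topology. Indeed, if $\sigma'\subseteq\tau$, $\sigma'|_H=\tau|_H$ and $H$ is $\tau$-dense in $G$, take a $\tau$-neighborhood $U$ of $e$, a symmetric $\tau$-open $V$ with $\overline{V}^{\,\tau}\subseteq U$, and a $\sigma'$-open $O\ni e$ with $O\cap H\subseteq V\cap H$; since $O$ is also $\tau$-open and $H$ is $\tau$-dense, $O\subseteq\overline{O\cap H}^{\,\tau}\subseteq\overline{V}^{\,\tau}\subseteq U$, which gives $\tau\subseteq\sigma'$ and hence $\sigma'=\tau$. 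So the completion machinery in your necessity step, while perfectly valid, is not actually forced; this elementary closure argument does the same job.
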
 
\begin{remark}\label{remark:forlocal}
	If $\F$ is a subfield of a local field $P,$ then its
	completion $\widehat\F$  is a topological field that can be identified with the closure of $\F$ in $P$. In case $\F$ is  infinite then  $\widehat\F$ is also a local field, as
	the local field $P$ contains no infinite discrete subfields (see \cite[p.  27]{MA}).
\end{remark}

\begin{proposition}\label{pro:minsl}\cite[Proposition 5.1]{MS}
	Let $\F$ be a 
	subfield of a local field. 
	Then the following conditions are equivalent: \ben \item  $\SL(n,\F)$ is   minimal; \item any non-trivial central subgroup of $\SL(n,\widehat{\F})$  intersects $\SL(n,\F)$ non-trivially  (i.e., if $1\neq\lambda\in \mu_n(\widehat{\F})$, then there exists $k\in \Z$ such that $1\neq \lambda^k\in \F$). \een
\end{proposition}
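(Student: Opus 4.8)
The plan is to prove Proposition~\ref{pro:minsl} by combining the Minimality Criterion (Fact~\ref{Crit}) with the structural facts about $\SL(n,\F)$ recorded before the statement. The key observation is that when $\F$ is an infinite subfield of a local field, its completion $\widehat\F$ is again a local field (Remark~\ref{remark:forlocal}), so $\SL(n,\widehat\F)$ is locally compact and, being a closed subgroup of $\SL(n,\widehat\F)$ in the natural topology, is minimal (indeed, locally compact groups arising this way are known to be minimal; this is the ambient group $G$ to which we apply the criterion). First I would set $G=\SL(n,\widehat\F)$ and observe that $\SL(n,\F)$ is a dense subgroup of $G$, since $\F$ is dense in $\widehat\F$ and the polynomial conditions defining $\SL$ are preserved under the pointwise topology. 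By Fact~\ref{Crit}, minimality of the dense subgroup $\SL(n,\F)$ is then equivalent to: $G$ is minimal \emph{and} $\SL(n,\F)$ is essential in $G$.

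The heart of the argument is therefore to identify the essentiality condition with condition~(2). The plan is to show that for $G=\SL(n,\widehat\F)$ the essential subgroups are controlled entirely by the center. Concretely, I would invoke the fact that every non-trivial closed normal subgroup $L$ of $\SL(n,\widehat\F)$ meets the (finite) center $Z(\SL(n,\widehat\F))=\{\lambda I_n:\lambda\in\mu_n(\widehat\F)\}$ non-trivially; this is the standard near-simplicity of $\SL$ over a field, where $\SL(n,\widehat\F)/Z$ is simple (or, more carefully, the normal subgroups are sandwiched between central subgroups and the whole group via $\PSL$ simplicity). Granting this, a subgroup meets \emph{every} non-trivial closed normal subgroup non-trivially if and only if it meets every non-trivial \emph{central} subgroup non-trivially. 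Since the center is finite and cyclic, generated by a root of unity, meeting every non-trivial central subgroup non-trivially is precisely the statement that for each $1\neq\lambda\in\mu_n(\widehat\F)$ some power $\lambda^k\neq 1$ lies in $\F$ — which is exactly the parenthetical reformulation in condition~(2).

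Thus the proof reduces to two ingredients: (i) minimality of $G=\SL(n,\widehat\F)$ as a locally compact group, and (ii) the reduction of essentiality to the center via the simplicity of $\PSL(n,\widehat\F)$. For the essentiality equivalence I would argue both directions: if $\SL(n,\F)$ is essential in $G$ then in particular it meets every non-trivial central subgroup, giving~(2); conversely, if~(2) holds and $L$ is any non-trivial closed normal subgroup, then $L\cap Z$ is a non-trivial central subgroup, and applying~(2) produces a non-trivial element of $L\cap Z\cap\SL(n,\F)$, so $\SL(n,\F)\cap L\neq\{e\}$ and essentiality follows. One must also dispose of the finite-field or small-$n$ degenerate cases where $\PSL$ is not simple, but since $\widehat\F$ is a local field (hence infinite) these exceptional cases do not arise for $n\geq 2$.

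The main obstacle I anticipate is justifying step~(i), the minimality of $\SL(n,\widehat\F)$ over a local field: this is not a formal consequence of local compactness and requires the substantive input that semisimple (or here special linear) groups over local fields carry no strictly coarser Hausdorff group topology. I would expect this to rest either on a prior minimality result for $\SL$ over local fields or on the rigidity of the center together with the simplicity of $\PSL(n,\widehat\F)$ and completeness; in the source paper \cite{MS} this is presumably established separately, so in this excerpt I would cite it rather than reprove it. The secondary subtlety is the precise translation between ``intersects every non-trivial central subgroup'' and the arithmetic condition on roots of unity, which is routine once one notes that $\mu_n(\widehat\F)$ is finite cyclic so its non-trivial subgroups are exactly the subgroups generated by the various $\lambda^k$.
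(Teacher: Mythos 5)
Your proof is correct and follows essentially the same route as the source of this statement (\cite[Proposition 5.1]{MS}, which this paper cites without reproving): density of $\SL(n,\F)$ in $\SL(n,\widehat{\F})$, the Minimality Criterion, minimality of $\SL(n,\widehat{\F})$ over the local field $\widehat{\F}$, and reduction of essentiality to central subgroups via quasi-simplicity of $\SL$. Two details worth tightening: density of $\SL(n,\F)$ in $\SL(n,\widehat{\F})$ should be argued via generation by elementary matrices (preservation of the polynomial identity $\det=1$ only gives closedness, not density), and when $Z(\SL(n,\widehat{\F}))$ is trivial a non-trivial closed normal subgroup $L$ need not meet the center as you assert in the converse direction, but the normal subgroup structure then forces $L=\SL(n,\widehat{\F})$, which meets $\SL(n,\F)$ anyway, so essentiality still follows.
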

\begin{theorem}\label{thm:minifmin}
Let $\F$ be a  subfield of a local field of characteristic distinct than $2.$ Then, $\SL(n,\F)$ is   minimal if and only if $\STP(n,\F)$ is minimal.
\end{theorem}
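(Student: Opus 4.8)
The plan is to prove the theorem by exploiting the minimality criterion for dense subgroups (Fact \ref{Crit}) applied to both $\SL(n,\F)$ and $\STP(n,\F)$, together with the structural facts already established. I would reduce both minimality statements to an essentiality condition over the completed field $\widehat\F$, and then show these essentiality conditions are the same. The key observation is that Proposition \ref{pro:minsl} already characterizes minimality of $\SL(n,\F)$ via intersecting central subgroups of $\SL(n,\widehat\F)$; I would aim to prove an analogous characterization for $\STP(n,\F)$ and then match them using Lemma \ref{lemma:samec} (which says the centers coincide) and Lemma \ref{lem:ngeq3}.

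**First** I would dispose of the small cases: for $n=1$ both groups are trivial, and for $n=2$ the groups $\STP(2,\F)$ and $\SL(2,\F)$ need separate direct treatment since Lemma \ref{lem:ngeq3} requires $n\geq 3$. For $n\geq 3$, the main line of argument runs as follows. Using Fact \ref{Crit}, $\STP(n,\F)$ is minimal in $\STP(n,\widehat\F)$ iff $\STP(n,\widehat\F)$ is minimal and $\STP(n,\F)$ is essential in it; I expect the completion $\STP(n,\widehat\F)$ to be minimal (indeed it should be locally compact, or at least one can invoke known minimality of such matrix groups over local fields). So minimality of $\STP(n,\F)$ reduces to \emph{essentiality}: every non-trivial closed normal subgroup $L$ of $\STP(n,\widehat\F)$ meets $\STP(n,\F)$ non-trivially.

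**The heart of the proof** is analyzing such a normal subgroup $L$. I would split into two cases according to whether $L$ meets the unitriangular part $\UT(n,\widehat\F)$. If $L\cap\UT(n,\widehat\F)\neq\{I\}$, then Lemma \ref{lem:ngeq3} immediately gives $L\cap\UT(n,\F)\neq\{I\}$, hence $L$ meets $\STP(n,\F)$. If instead $L$ meets $\UT(n,\widehat\F)$ trivially, I would argue that $L$ must be central: since $\UT(n,\widehat\F)$ is the derived subgroup and $L$ is normal, the commutators $[L,\STP(n,\widehat\F)]$ land in $\UT(n,\widehat\F)\cap L=\{I\}$, forcing $L\subseteq Z(\STP(n,\widehat\F))$. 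By Lemma \ref{lemma:samec} this center equals $Z(\SL(n,\widehat\F))=\{\lambda I_n:\lambda\in\mu_n(\widehat\F)\}$, so the essentiality condition for $\STP$ collapses to exactly the central-intersection condition appearing in Proposition \ref{pro:minsl}(2) for $\SL$. This shows the essentiality conditions for $\STP(n,\F)$ and $\SL(n,\F)$ coincide, and combined with the minimality of both completions yields the equivalence.

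**The main obstacle** I anticipate is establishing minimality of the completion $\STP(n,\widehat\F)$ (and of $\SL(n,\widehat\F)$) cleanly, and handling the $n=2$ case where the commutator-into-center argument is weakest and the diagonal rescaling trick of Lemma \ref{lem:ngeq3} is unavailable. The characteristic $\neq 2$ hypothesis should enter precisely in the $n=2$ analysis and in ensuring enough diagonal matrices with distinct entries exist inside $\STP(n,\widehat\F)$; I would need to verify that scalar matrices $\lambda I_n$ with $\lambda\in\mu_n$ genuinely lie in $\STP(n,\F)$ exactly when $\lambda\in\F$, so that the two essentiality conditions are literally identical rather than merely parallel.
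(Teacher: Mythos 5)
Your main line for $n\geq 3$ is exactly the paper's proof: reduce both minimality statements to essentiality in the completion via Fact \ref{Crit}, then split a closed normal subgroup $L$ of $\STP(n,\widehat{\F})$ into the central case (where Lemma \ref{lemma:samec} and Proposition \ref{pro:minsl} identify the resulting condition with the one characterizing minimality of $\SL(n,\F)$) and the non-central case (where $L$ meets the derived subgroup $\UT(n,\widehat{\F})$ and Lemma \ref{lem:ngeq3} finishes). Your direct commutator argument $[L,\STP(n,\widehat{\F})]\subseteq L\cap \UT(n,\widehat{\F})=\{I\}$ is a clean, self-contained substitute for the paper's citation of \cite[Lemma 2.3]{XDSD}, and your forward direction (minimality of $\STP(n,\F)$ gives essentiality, hence the central intersection condition, hence minimality of $\SL(n,\F)$) is the paper's argument as well.

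However, two points you list only as ``obstacles'' are genuine holes as written. First, the case $n=2$ is never actually proved: you say it needs separate direct treatment but do not supply one. The paper closes it by quoting \cite[Theorem 3.4]{MS}, which yields minimality of $\STP(2,\F)$ for any infinite non-discrete locally retrobounded field, so the implication from minimality of $\SL(2,\F)$ is immediate; the converse for $n=2$ is covered by the forward argument, which needs no case analysis (alternatively, note that $-1\in\F$ makes the condition of Proposition \ref{pro:minsl} automatic for $n=2$ in characteristic $\neq 2$). Second, your fallback justification that $\STP(n,\widehat{\F})$ ``should be locally compact'' and therefore minimal is invalid: local compactness does not imply minimality ($\R$ is the standard counterexample). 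The minimality of $\STP(n,\widehat{\F})$ over a local field is a non-trivial input, namely \cite[Theorem 3.19]{MS}, and it is precisely here (together with the $n=2$ citation) that the hypothesis on the characteristic is consumed; your alternative suggestion to ``invoke known minimality of such matrix groups over local fields'' is the correct move, but it is a citation that must actually be made, not a consequence of local compactness.
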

\begin{proof} Without loss of generality we may assume that $\F$ is infinite. Suppose first that $\STP(n,\F)$ is minimal.  By Lemma \ref{lemma:samec},  $Z(\STP(n,\F))=Z(\SL(n,\F))$. Since this center is finite it follows from the Minimality Criterion that any non-trivial central subgroup of $\STP(n,\widehat{\F})$  intersects $\STP(n,\F)$ non-trivially.  This implies that any non-trivial central subgroup of $\SL(n,\widehat{\F})$  intersects $\SL(n,\F)$ non-trivially. By Proposition \ref{pro:minsl}, $\SL(n,\F)$ is   minimal.
	
Conversely,  let us assume that $\SL(n,\F)$ is   minimal. In case $n=2$ then  $\STP(n,\F)$ is minimal by \cite[Theorem 3.4]{MS} as an infinite subfield of a local field is   locally retrobounded and non-discrete. So, we may assume that $n\geq 3.$  By \cite[Theorem 3.19]{MS}, $\STP(n,\widehat{\F})$ is minimal as $\widehat{\F}$ is a local field (see Remark \ref{remark:forlocal}).   In view of the Minimality Criterion, it suffices to show that  $\STP(n,\F)$ is essential in $\STP(n,\widehat{\F})$. Let $L$ be closed normal non-trivial subgroup of $\STP(n,\widehat{\F})$. If \[L\subseteq Z(\STP(n,\widehat{\F}))=Z(\SL(n,\widehat{\F})),\]  then  $L$ intersects $\SL(n,\F)$ non-trivially by Proposition \ref{pro:minsl}. Clearly, this implies that $L$ intersects $\STP(n,\F)$ non-trivially. If $L$ is not central, then it must non-trivially intersect  $\UT(n,\widehat{\F}),$  the derived subgroup of $\STP(n,\widehat{\F})$, in view of  \cite[Lemma 2.3]{XDSD}. Now,  Lemma \ref{lem:ngeq3}  implies that $L$ intersects $\STP(n,\F)$ non-trivially and we deduce that $\STP(n,\F)$ is essential in $\STP(n,\widehat{\F})$.
\end{proof}
In view of Theorem \ref{thm:fermat} and Theorem \ref{thm:minifmin},  the following characterization of the the  Fermat primes is obtained.

\begin{theorem} \label{thm:newfermat}
	For an odd prime  $p$ the following conditions are equivalent: \ben\item $p$ is a  Fermat prime;
	\item $\STP(p-1,(\Q,\tau_p))$ is minimal;
	\item  $\STP(p-1,\Q(i))$ is minimal. 
	\een  
\end{theorem}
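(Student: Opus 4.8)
The plan is to derive Theorem \ref{thm:newfermat} as a direct corollary of Theorem \ref{thm:fermat} together with Theorem \ref{thm:minifmin}, by simply transferring the established equivalences for $\SL$ to the corresponding statements for $\STP$. The key observation is that both fields appearing in Theorem \ref{thm:fermat}---namely $(\Q,\tau_p)$ with the $p$-adic topology and the Gaussian rationals $\Q(i)\subset\C$---are subfields of a local field of characteristic distinct from $2$. Indeed, $(\Q,\tau_p)$ embeds in the local field $\Q_p$ of $p$-adic numbers, while $\Q(i)$ is a subfield of $\C$, which is a local field; both have characteristic $0$. Thus Theorem \ref{thm:minifmin} applies verbatim to each, giving that $\SL(p-1,\F)$ is minimal if and only if $\STP(p-1,\F)$ is minimal for $\F\in\{(\Q,\tau_p),\Q(i)\}$.

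Concretely, I would carry out the argument as follows. First I would recall Theorem \ref{thm:fermat}, which asserts the equivalence of the three conditions: $p$ is a Fermat prime; $\SL(p-1,(\Q,\tau_p))$ is minimal; and $\SL(p-1,\Q(i))$ is minimal. Next I would apply Theorem \ref{thm:minifmin} with $n=p-1$ to the field $(\Q,\tau_p)$, noting it is an (infinite) subfield of the local field $\Q_p$ of characteristic $\neq 2$, to obtain that condition (2) of Theorem \ref{thm:fermat} is equivalent to the minimality of $\STP(p-1,(\Q,\tau_p))$, i.e.\ condition (2) of the present theorem. Then I would apply Theorem \ref{thm:minifmin} again with the same $n=p-1$ to the field $\Q(i)$, which is an infinite subfield of $\C$ of characteristic $\neq 2$, to obtain that condition (3) of Theorem \ref{thm:fermat} is equivalent to the minimality of $\STP(p-1,\Q(i))$, i.e.\ condition (3) here. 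Chaining these equivalences through Theorem \ref{thm:fermat} yields that all three conditions of the present theorem are mutually equivalent.

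There is essentially no substantive obstacle in this proof, since all the hard work is packaged into the two cited theorems; the only point requiring a moment's care is verifying the hypotheses of Theorem \ref{thm:minifmin} for each field. For $(\Q,\tau_p)$ one must confirm that the $p$-adic topology is the subspace topology inherited from an embedding into the local field $\Q_p$, so that $(\Q,\tau_p)$ is genuinely a topological subfield of a local field, and that its characteristic is $0\neq 2$. For $\Q(i)$ one observes directly that it is a subfield of $\C$ (hence of a local field) of characteristic $0$. Both fields are infinite, matching the harmless reduction made at the start of the proof of Theorem \ref{thm:minifmin}. I would therefore expect the proof to consist of little more than the sentence: ``The result follows immediately from Theorem \ref{thm:fermat} and Theorem \ref{thm:minifmin}, applied with $n=p-1$ to the fields $(\Q,\tau_p)$ and $\Q(i)$, each of which is an infinite subfield of a local field of characteristic $\neq 2$.''
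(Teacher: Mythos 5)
Your proof is correct and is exactly the paper's argument: the paper derives Theorem \ref{thm:newfermat} immediately by combining Theorem \ref{thm:fermat} with Theorem \ref{thm:minifmin}, applied to the fields $(\Q,\tau_p)\subset \Q_p$ and $\Q(i)\subset \C$, both subfields of local fields of characteristic $0\neq 2$. Your verification of the hypotheses is the only content needed, and it matches what the paper leaves implicit.
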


The following concept has a key role in the Total Minimality Criterion. 
\begin{definition}
	A subgroup $H$ of a topological group $G$ is totally dense if for every closed normal subgroup
	$L$ of $G$ the intersection $L \cap H$ is dense in $L.$	
\end{definition} 
\begin{fact}\label{fact:tmc} \cite[Total Minimality Criterion]{DP}
	Let $H$ be a dense subgroup of a topological group $G$. Then $H$ is
	totally minimal if and only if $G$ is totally minimal and $H$ is totally dense in $G$.
\end{fact}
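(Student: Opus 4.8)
The plan is to recast total minimality as the minimality of every Hausdorff quotient and then apply the Minimality Criterion (Fact \ref{Crit}) quotient by quotient. Concretely, I would use that a topological group is totally minimal exactly when each of its quotients by a closed normal subgroup is minimal. So I fix a closed normal subgroup $N$ of $H$, form its closure $\widetilde N := \overline N$ in $G$, and first record the structural facts linking the two groups: $\widetilde N$ is a closed normal subgroup of $G$ (conjugation is a homeomorphism, and the density of $H$ lets me push normality from $H$ to $\overline H = G$), the equality $H \cap \widetilde N = N$ holds because $N$ is closed in $H$, and $N$ is dense in $\widetilde N$ by construction.

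The technical heart — and the step I expect to be the main obstacle — is the topological identification $H/N \cong H\widetilde N/\widetilde N$ as a dense subgroup of $G/\widetilde N$. The canonical map $H/N \to G/\widetilde N$ induced by $h \mapsto h\widetilde N$ is always a continuous injection with dense image (by density of $H$), but I must check it is open onto its image, and this is exactly where the density of $N$ in $\widetilde N$ enters: given $V$ open in $G$ and a point of $H$ whose coset meets $V$, I approximate the corresponding element of $\widetilde N$ by an element of $N$ to produce a representative lying in $H \cap V$, which yields $\pi(H\cap V) = \pi(H) \cap \pi(V)$ for the quotient map $\pi\colon G\to G/\widetilde N$. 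Granting this, $G/\widetilde N$ is Hausdorff and $H/N$ sits inside it as a dense subgroup topologically isomorphic to $H/N$.

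For the direction assuming $G$ totally minimal and $H$ totally dense, I would argue that $H/N$ is minimal for every such $N$. Since $G$ is totally minimal, $G/\widetilde N$ is minimal, so by Fact \ref{Crit} it remains to see that $H\widetilde N/\widetilde N$ is essential in $G/\widetilde N$. Let $M/\widetilde N$ be a non-trivial closed normal subgroup, with $M$ closed normal in $G$ and $\widetilde N \subsetneq M$. Total density gives that $M \cap H$ is dense in $M$, so the non-empty open set $M\setminus \widetilde N$ contains some $h \in M \cap H$; its image is then a non-trivial element of $(H\widetilde N/\widetilde N)\cap(M/\widetilde N)$. Essentiality follows, hence $H/N$ is minimal, and $H$ is totally minimal.

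For the converse, assume $H$ totally minimal. First I would establish total density: fix $L$ closed normal in $G$, set $N := L\cap H$ (closed normal in $H$), and suppose toward a contradiction that $\overline N \subsetneq L$. Minimality of $H/N$ together with Fact \ref{Crit} forces $H\overline N/\overline N$ to be essential in $G/\overline N$, so the non-trivial closed normal subgroup $L/\overline N$ must meet it non-trivially; but any witnessing $h \in H$ lies in $H \cap L = N \subseteq \overline N$, contradicting non-triviality. Thus $\overline{L\cap H} = L$, i.e. $H$ is totally dense. Total minimality of $G$ then follows quickly: for $M$ closed normal in $G$, total density gives $\overline{M\cap H}=M$, so $H/(M\cap H)$ is a minimal dense subgroup of $G/M$, and Fact \ref{Crit} (the implication that a minimal dense subgroup forces the ambient group to be minimal) yields that $G/M$ is minimal. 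This completes the equivalence.
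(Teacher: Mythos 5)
Your proposal is correct, but there is nothing in the paper to compare it against: the paper states the Total Minimality Criterion as a \emph{Fact}, citing Dikranjan--Prodanov \cite{DP}, and gives no proof of its own. Your argument is essentially the classical one from that literature: recast total minimality as minimality of all quotients by closed normal subgroups, show that for a closed normal $N\leq H$ the quotient $H/N$ embeds as a dense topological subgroup of $G/\overline{N}$ (here the density of $N$ in $\overline{N}$ is exactly what makes the canonical injection open onto its image, via $\pi(H\cap V)=\pi(H)\cap\pi(V)$), and then run the Minimality Criterion (Fact \ref{Crit}) quotient by quotient. All the key verifications you flag do go through: $\overline{N}$ is normal in $G$ by continuity of conjugation and density of $H\times N$ in $G\times\overline{N}$; $H\cap\overline{N}=N$ since $N$ is closed in $H$; closed normal subgroups of $G/\overline{N}$ are of the form $M/\overline{N}$ with $M$ closed normal in $G$ containing $\overline{N}$, so total density of $H$ hands you essentiality, and conversely essentiality applied to $L/\overline{L\cap H}$ forces $\overline{L\cap H}=L$. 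The one step you use silently --- that total minimality is equivalent to minimality of every Hausdorff quotient --- is standard and elementary (factor any continuous surjection onto a Hausdorff group through its kernel), so the proof is complete in outline. What your write-up buys, relative to the paper, is a self-contained derivation of Fact \ref{fact:tmc} from Fact \ref{Crit}, making the Total Minimality Criterion a corollary of the Minimality Criterion rather than an independent imported result.
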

\begin{theorem}\label{t:SLSMALL} \cite[Theorem 4.7]{MS}
	Let $\F$ be a 
	subfield of a local field.
	Then 
	$\SL(n,\F)$ is totally minimal if and only if
	$Z(\SL(n,\F))=Z(\SL(n,\widehat{\F}))$  (i.e., $\mu_n(\F)=\mu_n(\widehat{\F})$).
\end{theorem}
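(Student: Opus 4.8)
The plan is to apply the Total Minimality Criterion (Fact~\ref{fact:tmc}) to the dense inclusion $\SL(n,\F)\hookrightarrow\SL(n,\widehat{\F})$. First I would dispose of the degenerate cases: if $n=1$ the group is trivial and both sides hold, and if $\F$ is finite it is discrete, so $\widehat{\F}=\F$, the equality $Z(\SL(n,\F))=Z(\SL(n,\widehat{\F}))$ is automatic, and $\SL(n,\F)$ is a finite (hence totally minimal) group. So assume $n\geq 2$ and $\F$ infinite. Then by Remark~\ref{remark:forlocal} the completion $\widehat{\F}$ is again a local field, and since $\F$ is dense in $\widehat{\F}$ and $\SL(n,\cdot)$ is cut out by the single polynomial condition $\det=1$, the subgroup $\SL(n,\F)$ is dense in $\SL(n,\widehat{\F})$ (for instance because the elementary matrices with $\F$-entries already generate a dense subgroup).

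Next I would record that the ambient group $G:=\SL(n,\widehat{\F})$ is totally minimal. This is the one genuinely external input: it follows from the structure theory of $\SL$ over local fields developed in \cite{MS}, in the same spirit as the minimality statements quoted in Proposition~\ref{pro:minsl} and used in Theorem~\ref{thm:minifmin}, every proper Hausdorff quotient of $G$ being obtained by factoring out a subgroup of the finite center and remaining minimal. Granting this, Fact~\ref{fact:tmc} reduces the theorem to a single clause: $\SL(n,\F)$ is totally minimal if and only if it is totally dense in $G$.

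It then remains to translate total density into the center condition, and here the key is the classification of the closed normal subgroups of $G$. Since $\widehat{\F}$ is an infinite field and $n\geq 2$, the group $\PSL(n,\widehat{\F})$ is simple, so every proper normal subgroup of $G$ is central; as the center $Z(G)=\{\lambda I_n:\lambda\in\mu_n(\widehat{\F})\}$ is finite, the closed normal subgroups of $G$ are exactly $G$ itself together with the (automatically closed, discrete) subgroups of $Z(G)$. For $L=G$ total density is automatic from density of $\SL(n,\F)$. For a subgroup $L\subseteq Z(G)$, discreteness of $L$ makes ``$L\cap\SL(n,\F)$ dense in $L$'' mean simply $L\subseteq\SL(n,\F)$. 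Taking $L=Z(G)$ shows that total density forces $Z(G)\subseteq\SL(n,\F)$, i.e. $Z(\SL(n,\widehat{\F}))=Z(\SL(n,\F))$ (equivalently $\mu_n(\widehat{\F})\cap\F=\mu_n(\F)=\mu_n(\widehat{\F})$); conversely, this equality puts every subgroup of $Z(G)$ inside $\SL(n,\F)$ and hence yields total density. Combined with the previous paragraph, this gives the stated equivalence.

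The main obstacle I anticipate is precisely the first external input, namely total minimality of $\SL(n,\widehat{\F})$ over the local field $\widehat{\F}$: unlike plain minimality, which Proposition~\ref{pro:minsl} handles for $\widehat{\F}$ essentially for free, total minimality requires knowing that every quotient of $G$ by a central finite subgroup is again minimal, and this rests on the simple-algebraic-group structure of $\SL(n,\widehat{\F})$. Once that fact and the normal-subgroup classification are in hand, the reduction of total density to the equality of the centers is entirely routine.
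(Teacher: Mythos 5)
Your proof is correct and follows essentially the same route as the original argument for this statement in \cite[Theorem 4.7]{MS} (the present paper only quotes the theorem, so the comparison is against that source): apply the Total Minimality Criterion (Fact~\ref{fact:tmc}) to the dense subgroup $\SL(n,\F)$ of $\SL(n,\widehat{\F})$, invoke total minimality of $\SL(n,\widehat{\F})$ over the local field $\widehat{\F}$, and use the fact that every proper (closed) normal subgroup of $\SL(n,\widehat{\F})$ is contained in the finite center to reduce total density to the equality $Z(\SL(n,\F))=Z(\SL(n,\widehat{\F}))$. Your handling of the degenerate cases and the reduction itself are accurate, so no gaps remain beyond the external inputs you correctly identify.
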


Let $\rho_m=e^{\frac{2\pi i }{m}}$ be the $m$-th primitive root of unity. The  next result extends \cite[Corollary 5.3]{MS}, where $\rho_4=i$ is considered. 
\begin{proposition}\label{ex:slnotmin}
	Let $\F$ be a dense subfield of $\C.$  Then, \ben \item   $\SL(n,\F)$ is totally minimal if and only if  $\rho_n\in \F;$  
	\item   $\SL(n,\F)$ is minimal if and only if     $\langle \rho_m \rangle \cap \F$ is non-trivial whenever $m$ divides $n.$  
	\een
\end{proposition}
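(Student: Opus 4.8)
The plan is to reduce both equivalences to the two cited minimality criteria, Theorem~\ref{t:SLSMALL} and Proposition~\ref{pro:minsl}, by exploiting the one feature that distinguishes this proposition from the general setting: since $\F$ is \emph{dense} in $\C$, its completion is $\widehat{\F}=\C$. Because $\C$ is a non-discrete locally compact field, hence a local field, both cited results apply verbatim with $\widehat{\F}=\C$. Before invoking them I would record the elementary structure of the relevant center: $\mu_n(\C)=\langle\rho_n\rangle$ is cyclic of order $n$; for each divisor $d\mid n$ its unique subgroup of order $d$ is $\langle\rho_d\rangle=\langle\rho_n^{n/d}\rangle$; and $\mu_n(\F)=\mu_n(\C)\cap\F$ since $\F\subseteq\C$.

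For part~(1), Theorem~\ref{t:SLSMALL} asserts that $\SL(n,\F)$ is totally minimal exactly when $\mu_n(\F)=\mu_n(\widehat{\F})=\mu_n(\C)$. As $\mu_n(\C)=\langle\rho_n\rangle$ is generated by $\rho_n$, the intersection $\mu_n(\C)\cap\F$ equals all of $\mu_n(\C)$ if and only if the single generator $\rho_n$ already lies in $\F$. This yields the equivalence with $\rho_n\in\F$ at once.

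For part~(2), Proposition~\ref{pro:minsl} says $\SL(n,\F)$ is minimal exactly when every $\lambda\in\mu_n(\C)$ with $\lambda\neq 1$ admits an exponent $k$ with $1\neq\lambda^k\in\F$, i.e.\ when $\langle\lambda\rangle\cap\F\neq\{1\}$ for every non-trivial $\lambda$. The decisive translation step is that, as $\lambda$ runs through the non-trivial elements of the cyclic group $\mu_n(\C)$, the cyclic subgroups $\langle\lambda\rangle$ run through precisely the subgroups $\langle\rho_d\rangle$ with $d\mid n$ and $d>1$ (an element of order $d$ generates the unique order-$d$ subgroup $\langle\rho_d\rangle$). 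Hence minimality is equivalent to $\langle\rho_m\rangle\cap\F\neq\{1\}$ for every divisor $m>1$ of $n$, which is the stated condition; the divisor $m=1$ is to be disregarded, as it corresponds to the excluded element $\lambda=1$.

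I do not anticipate a genuine obstacle: once $\widehat{\F}=\C$ is observed, the proof is a direct application of the two black-box criteria. The only point demanding care is the bookkeeping in part~(2), namely matching the existential statement ``some power of $\lambda$ lies in $\F$'' with the subgroup-intersection condition and checking that ranging over all non-trivial $\lambda$ is the same as ranging over all non-trivial cyclic subgroups $\langle\rho_m\rangle$. One may optionally remark that, since $\langle\rho_p\rangle\subseteq\langle\rho_m\rangle$ whenever a prime $p$ divides $m$, it in fact suffices to test the prime divisors of $n$, but this refinement is not required for the statement as written.
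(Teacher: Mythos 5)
Your proposal is correct and follows essentially the same route as the paper: both reduce the statement to Theorem~\ref{t:SLSMALL} and Proposition~\ref{pro:minsl} via the observation that $\widehat{\F}=\C$ for a dense subfield, and then translate the criteria using the structure of the cyclic group $\mu_n(\C)=\langle\rho_n\rangle$ (the paper via explicit exponent manipulation with $\gcd(k,m)=1$, you via the uniqueness of the order-$d$ subgroup, which is the same fact). Your remark about disregarding the divisor $m=1$ matches the paper's implicit convention, so there is no gap.
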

\begin{proof}
	(1) Necessity: follows from Theorem \ref{t:SLSMALL}.  Indeed, $\lambda=\rho_n\in \C$ is an $n$-th root of unity.
	
	Sufficiency: if $\lambda\in \C $ and $\lambda^n=1$,
	then $\lambda\in \langle \rho_n \rangle\subseteq \F.$ So, we may use  Theorem \ref{t:SLSMALL} again.
	
	(2) Necessity: Let $1\neq \lambda\in \C$ be an $n$-th root of unity. Then $\lambda$ is an $m$-th primitive root of unity where $m$ divides $n.$ Since $\SL(n,\F)$ is minimal it follows that there exists $k$ such that
	$1\neq \lambda^k\in \F.$ Clearly, $\lambda^k\in \langle \rho_m \rangle \cap \F.$  So, $\langle \rho_m \rangle \cap \F$ is non-trivial. Now use Proposition \ref{pro:minsl}.

	Sufficiency: Let $1\neq \lambda\in \C$ be an $n$-th root of unity.   Then $\lambda$ is an $m$-th primitive root of unity where $m$ divides $n.$ This means that 
	$\lambda=e^{\frac{2\pi i k}{m}}=(\rho_m)^k$, where $1\leq k\leq m$  with $\gcd(k,m)=1.$
	By our assumption,  $\langle \rho_m \rangle \cap \F$ is non-trivial. Hence, there exists $l$ such that $1\neq (\rho_m)^l\in\langle \rho_m \rangle \cap \F.$ Since $\gcd(k,m)=1$ and 	$\lambda=(\rho_m)^k$, it follows that there exists $t\in \Z$ such that $(\rho_m)^l=\lambda^t.$ This proves the minimality of   $\SL(n,\F),$ in view of  Proposition \ref{pro:minsl}.
\end{proof}
\begin{remark}\label{rem:last}
It is known that  a  subfield $\F$ is dense in $\C$ if and only if it is not contained in $\R.$ By \cite[Corollary 4.8]{MS}, if $\F\subseteq \R,$ then $\SL(n,\F)$ is totally minimal for every $n\in \N$. So, together with Proposition \ref{ex:slnotmin} we obtain criteria for the minimality and total minimality of $\SL(n,\F),$ where $\F$ is any subfield of $\C$ and  $n\in \N.$ 
\end{remark}
Since $\C$ has zero characteristic, Theorem \ref{thm:minifmin}, Proposition \ref{ex:slnotmin} and Remark \ref{rem:last}  imply:
\begin{corollary}\label{cor:alsostp}
Let $\F$ be a topological subfield of $\C.$ \ben \item 
If $\F$ is dense in $\C$, 
 then $\STP(n,\F)$ is minimal if and only if     $\langle \rho_m \rangle \cap \F$ is non-trivial whenever $m$ divides $n.$
 \item If $\F\subseteq \R$,  then $\STP(n,\F)$ is  minimal for every $n\in \N$.\een
\end{corollary}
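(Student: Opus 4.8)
The plan is to reduce both statements about $\STP(n,\F)$ to the already-settled statements about $\SL(n,\F)$, using Theorem~\ref{thm:minifmin} as the only bridge. First I would record that $\C$ is a local field — it is non-discrete, locally compact, and a topological field — of characteristic $0$, hence of characteristic distinct from $2$. Any topological subfield $\F$ of $\C$ contains the prime field $\Q$, so it is automatically infinite, and it carries the topology induced from $\C$. Thus $\F$ meets the hypotheses of Theorem~\ref{thm:minifmin}, and I may conclude that $\STP(n,\F)$ is minimal if and only if $\SL(n,\F)$ is minimal. Everything else is a matter of quoting the correct minimality criterion for $\SL(n,\F)$ in each case.

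For part (1), where $\F$ is dense in $\C$, I would invoke Proposition~\ref{ex:slnotmin}(2) verbatim: minimality of $\SL(n,\F)$ is equivalent to $\langle\rho_m\rangle\cap\F$ being non-trivial for every $m$ dividing $n$. Chaining this with the equivalence supplied by Theorem~\ref{thm:minifmin} yields precisely the claimed criterion for the minimality of $\STP(n,\F)$. For part (2), where $\F\subseteq\R$, I would appeal to Remark~\ref{rem:last}: by \cite[Corollary 4.8]{MS}, $\SL(n,\F)$ is totally minimal, and in particular minimal, for every $n\in\N$. Feeding this into Theorem~\ref{thm:minifmin} once more gives that $\STP(n,\F)$ is minimal for every $n$.

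The argument is essentially a bookkeeping of hypotheses, so the only point that genuinely needs checking is that Theorem~\ref{thm:minifmin} applies in each case — namely, that $\C$ (and, for part (2), equally $\R$, since $\F\subseteq\R\subseteq\C$ inherits the same subspace topology from either) is a local field of characteristic $\neq 2$. As both verifications are immediate, I do not expect a substantive obstacle; the mathematical content of the corollary resides entirely in the earlier results that it packages together.
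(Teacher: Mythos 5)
Your proposal is correct and follows exactly the paper's route: the paper derives the corollary in one line by observing that $\C$ is a local field of characteristic zero, so Theorem~\ref{thm:minifmin} reduces minimality of $\STP(n,\F)$ to that of $\SL(n,\F)$, which is then settled by Proposition~\ref{ex:slnotmin}(2) for dense $\F$ and by Remark~\ref{rem:last} for $\F\subseteq\R$. Your extra checks (that $\F$ is infinite and that the hypotheses of Theorem~\ref{thm:minifmin} hold) are sound and merely make explicit what the paper leaves implicit.
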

\section{Proof of the main result}

 By \cite[Corollary 5.3]{MS}, $\SL(n,\Q(i))$ is minimal if and only if $n=2^k,$ where $k$ is a non-negative integer.  This immediately implies  the following theorem  concerning the Mersenne primes (compare with  Theorem \ref{thm:fermat} and Theorem \ref{thm:newfermat}).
 \begin{theorem}\label{thm:charofmer} 
 	For a prime  $p$ the following conditions are equivalent:
 	\ben
 	\item $p$ is a Mersenne prime;
 	\item $\SL(p+1,\Q(i))$ is minimal;
 	\item  $\STP(p+1,\Q(i))$ is minimal.
 	\een
 \end{theorem}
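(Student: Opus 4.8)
The plan is to reduce all three conditions to the single arithmetic statement that $p+1$ is a power of $2$, leaning entirely on the two results already in hand: the characterization that $\SL(n,\Q(i))$ is minimal if and only if $n=2^k$ for a non-negative integer $k$ (\cite[Corollary 5.3]{MS}), and the equivalence of minimality of $\SL(n,\F)$ and $\STP(n,\F)$ over a subfield $\F$ of a local field of characteristic $\neq 2$ (Theorem \ref{thm:minifmin}). Because $p$ is assumed prime throughout, the work is almost purely bookkeeping.

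First I would record the elementary number-theoretic reformulation of condition (1). Given that $p$ is prime, $p$ is a Mersenne prime exactly when $p=2^k-1$ for some $k$, that is, exactly when $p+1=2^k$ is a power of $2$; this is immediate from the definition of a Mersenne prime as a prime that is one less than a power of two. In one direction a Mersenne prime has the shape $2^q-1$ with $q$ prime, so $p+1=2^q$; in the other, if $p$ is prime and $p+1=2^k$, then $p=2^k-1$ is a prime one less than a power of two, hence a Mersenne prime. Applying \cite[Corollary 5.3]{MS} with $n=p+1$ then yields the equivalence of (1) and (2): $\SL(p+1,\Q(i))$ is minimal iff $p+1$ is a power of $2$ iff $p$ is a Mersenne prime.

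For the equivalence of (2) and (3), I would invoke Theorem \ref{thm:minifmin}. The field $\Q(i)$ is a subfield of $\C$, which is a local field, and $\Q(i)$ has characteristic $0\neq 2$, so the hypotheses of that theorem hold verbatim; it gives that $\SL(p+1,\Q(i))$ is minimal if and only if $\STP(p+1,\Q(i))$ is minimal. Chaining the three equivalences completes the argument. I do not expect a genuine obstacle: all the substance is carried by the cited results, and the only points to verify are that $\Q(i)$ meets the (characteristic-zero, subfield-of-a-local-field) hypotheses of Theorem \ref{thm:minifmin} and that, for a prime $p$, ``$p+1$ is a power of $2$'' is precisely the Mersenne-prime condition. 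As the surrounding discussion indicates, the theorem is an immediate consequence.
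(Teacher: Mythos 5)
Your proof is correct and follows essentially the same route as the paper: the paper likewise derives the theorem immediately from \cite[Corollary 5.3]{MS} (minimality of $\SL(n,\Q(i))$ iff $n=2^k$) combined with Theorem \ref{thm:minifmin} applied to $\Q(i)\subseteq\C$, with the same bookkeeping that for a prime $p$, being a Mersenne prime is equivalent to $p+1$ being a power of $2$.
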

\begin{theorem}\label{thm:pofsl}
\
\ben \item	If $\F$ is a local 
	field, then  $\prod_{n\in \N}\SL(n,\F)$   is minimal. \item If, in addition, $\mathrm{char}(\F)\neq 2,$ then $\prod_{n\in \N}\STP(n,\F)$ is minimal.
	\een
\end{theorem}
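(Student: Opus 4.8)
**The plan is to prove minimality of each infinite product by exhibiting it as an essential dense subgroup of the corresponding product over the completion, and then invoking the Minimality Criterion (Fact \ref{Crit}).** Since $\F$ is already a local field, it equals its own completion $\widehat\F$, so the subtlety lies not in passing from $\F$ to $\widehat\F$ (as in Theorem \ref{thm:minifmin}) but in controlling an \emph{infinite} product of groups, where minimality is notoriously delicate. My first step for part (1) is to recall that each factor $\SL(n,\F)$ is itself minimal. This follows from Proposition \ref{pro:minsl}: since $\F=\widehat\F$, condition (2) holds trivially (take $k=1$), so every $\SL(n,\F)$ is minimal. The real content is that minimality is \emph{not} in general productive, so I cannot simply cite a product theorem; instead I must work with the structure of these specific groups.

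The key idea I would pursue is to reduce to the center via the Minimality Criterion. For a product $G=\prod_n \SL(n,\F)$, I would like to show $G$ is minimal by showing that any coarser Hausdorff group topology still separates points, which amounts to showing $G$ is essential in itself under any completion-type argument. More concretely, I expect the right framework is this: each $\SL(n,\F)$ has finite center $Z_n=\{\lambda I_n : \lambda\in\mu_n(\F)\}$, and the known minimality of each factor comes (via Proposition \ref{pro:minsl}) from the fact that every nontrivial central subgroup meets the factor nontrivially. For an infinite product, I would invoke a result on products of minimal groups whose centers behave well — the natural hypothesis being that the factors are \emph{perfect minimal} or that the relevant essential subgroup condition is preserved under arbitrary products. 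The cleanest route is to show that $\prod_n\SL(n,\F)$ is \emph{totally minimal} (or that each factor is), since total minimality of each $\SL(n,\widehat\F)=\SL(n,\F)$ holds by Theorem \ref{t:SLSMALL} (as $\mu_n(\F)=\mu_n(\widehat\F)$ trivially), and total minimality, unlike bare minimality, \emph{is} preserved under arbitrary products of groups whose finite centers are handled uniformly.

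For part (2), once part (1) is established, the work is essentially done by appealing to Theorem \ref{thm:minifmin} factorwise together with a compatibility of the semidirect product structure with products. Here I would argue that $\prod_n\STP(n,\F)\cong \prod_n\big(\UT(n,\F)\rtimes_\alpha \mathsf{A}_n\big)$, and use the characteristic $\neq 2$ hypothesis exactly as in Theorem \ref{thm:minifmin} to transfer minimality from the $\SL$-side to the $\STP$-side. The essential-subgroup argument of Theorem \ref{thm:minifmin} — splitting a closed normal subgroup $L$ into the central case (handled by Proposition \ref{pro:minsl}) and the non-central case (handled by \cite[Lemma 2.3]{XDSD} and Lemma \ref{lem:ngeq3}) — should localize to each coordinate, so that essentiality in the product follows from essentiality in each factor plus the product structure.

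**The main obstacle I anticipate is the non-productivity of minimality itself.** Unlike compactness or total minimality, an arbitrary product of minimal groups need not be minimal, so the crux is to identify the precise structural feature of these matrix groups that rescues productivity. I expect the resolution to hinge on the finite (indeed uniformly controlled) centers together with total minimality of the factors: if each factor is totally minimal and the completion introduces no new central elements, then the essentiality condition in the Minimality Criterion can be verified coordinatewise, and the potential failure of productivity — which typically arises from "diffuse" central phenomena spread across infinitely many coordinates — is blocked precisely because each center is finite and already rational over $\F$. Pinning down this uniform control, and checking that a closed normal subgroup of the product projects well enough onto the factors to apply Lemma \ref{lem:ngeq3} and \cite[Lemma 2.3]{XDSD}, is where the genuine care will be required.
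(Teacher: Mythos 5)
Your proposal has a genuine gap at its core: both of your mechanisms for handling the infinite product fail. First, the Minimality Criterion framework is vacuous here. Since $\F$ is a local field, each factor $\SL(n,\F)$ (and each $\STP(n,\F)$, being closed in $\GL(n,\F)$) is locally compact, hence Ra\u{\i}kov complete, and a product of complete groups is complete. So $\prod_{n\in\N}\SL(n,\F)$ \emph{is} its own completion, and ``essentiality of the product in the product of completions'' holds trivially while proving nothing; Fact \ref{Crit} only transfers minimality from a complete group to a dense subgroup, it cannot establish minimality of the complete group itself. Second, the pivot you rely on --- that total minimality ``is preserved under arbitrary products of groups whose finite centers are handled uniformly'' --- is not a theorem, and the unqualified statement is false: $(\Z,\tau_p)$ is totally minimal (it is totally dense in the compact group $\Z_p$), yet $(\Z,\tau_p)\times(\Z,\tau_p)$ is not even minimal, since for $\alpha\in\Z_p\setminus\Q$ the closed subgroup $\{(x,\alpha x):x\in\Z_p\}$ of $\Z_p^2$ meets $\Z^2$ trivially. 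The vague finite-center qualifier cannot be what rescues productivity either: the paper's own Theorem \ref{prop:nminprod} exhibits an infinite product of minimal groups with centers of order at most $4$ that is not minimal, via exactly the kind of ``diagonal'' closed central subgroup that your coordinatewise essentiality argument cannot see. For the same reason, your part (2) reduction (apply Theorem \ref{thm:minifmin} factorwise) only yields minimality of each factor $\STP(n,\F)$, which is not the issue; closed normal subgroups of an infinite product are not controlled by their projections onto the factors.

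The paper's actual proof uses three ingredients, none of which appear in your proposal: (i) each \emph{center-free} quotient is minimal --- $\PSL(n,\F)=\SL(n,\F)/Z(\SL(n,\F))$ by Bader--Gelander \cite{BG}, and $\STP(n,\F)/Z(\STP(n,\F))$ by \cite[Theorem 3.17]{MS} together with Lemma \ref{lemma:samec}; (ii) arbitrary products of minimal groups with \emph{trivial} center are minimal \cite[Theorem 1.15]{MEG95} --- this, not total minimality, is the productivity theorem that is actually available; (iii) minimality lifts back from the quotient across the compact central kernel $\prod_n Z(\SL(n,\F))$ by \cite[Theorem 7.3.1]{DPS89}. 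In short, the correct strategy is to kill the centers first (pass to $\PSL$, respectively $\STP/Z$), apply the center-free product theorem, and then climb back up over a compact normal subgroup; your proposal instead tries to fight the central phenomena inside the product directly, which is precisely where infinite products of minimal groups break down.
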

\begin{proof}
(1) Since a compact group is minimal we may assume without loss of generality that $\F$ is  infinite.	By \cite{BG} (see also \cite[Theorem 4.3]{MS}), the projective special linear group $\PSL(n,\F)=\SL(n,\F)/Z(\SL(n,\F))$ (equipped with the quotient topology) is minimal for every $n\in \N.$ Being algebraically simple (see \cite[3.2.9]{RO}), $\PSL(n,\F)$ has trivial center.  Therefore, the topological product $\prod_{n\in \N}\PSL(n,\F)$ is minimal by \cite[Theorem 1.15]{MEG95}. As \[\prod_{n\in \N}\PSL(n,\F)\cong \prod_{n\in \N}\SL(n,\F)/Z\Big(\prod_{n\in \N}\SL(n,\F)\Big),\] where $Z\Big(\prod_{n\in \N}\SL(n,\F)\Big)$ is compact,  it follows from \cite[Theorem 7.3.1]{DPS89} that 

 $\prod_{n\in \N}\SL(n,\F)$ is minimal.\\
 (2) By Lemma \ref{lemma:samec}, 
  the center of $\STP(n,\F)/Z(\STP(n,\F))$ is trivial for every $n\in \N$, where	$Z(\STP(n,\F))=Z(\SL(n,\F)).$   
  By \cite[Theorem 3.17]{MS},   $\STP(n,\F)/Z(\STP(n,\F))$ is minimal.  We complete the proof using the topological isomorphism \[\prod_{n\in \N}\STP(n,\F)/\prod_{n\in \N}Z(\STP(n,\F))\cong \prod_{n\in \N}(\STP(n,\F)/Z(\STP(n,\F)))\] and similar arguments to those appearing in the proof of (1).
\end{proof}
\begin{definition}\cite{St1} 	A minimal group $G$ is perfectly minimal if $G\times H$ is minimal for every minimal group $H.$
\end{definition}

	\begin{proposition}\label{prop:p2isper}
Let $\F$ be a subfield of a local field. 
Then $\SL(2^k,\F)$ is perfectly minimal for every $k\in \N.$ If $\mathrm{char}(\F)\neq 2,$ then	 $\STP(2^k,\F)$ is perfectly minimal for every $k\in \N.$
	\end{proposition}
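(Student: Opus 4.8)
The plan is to reduce the perfect minimality of $\SL(2^k,\F)$ and $\STP(2^k,\F)$ to the total minimality of these groups, using the fact that for local fields the relevant quotients are totally minimal and hence perfectly minimal. First I would handle the $\SL$ case. The key structural observation is that $\SL(2^k,\F)$ has finite center $Z(\SL(2^k,\F))=\{\lambda I:\lambda\in\mu_{2^k}(\F)\}$, and the quotient $\PSL(2^k,\F)$ is algebraically simple (by \cite[3.2.9]{RO}). I would invoke a known criterion from the minimality literature stating that an extension of a perfectly minimal group by a compact (here finite) group is perfectly minimal, or equivalently use the result that a minimal group which is a central extension by a compact subgroup of a perfectly minimal group is itself perfectly minimal. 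Thus the heart of the matter is establishing that $\PSL(2^k,\F)$ is perfectly minimal.

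For the perfect minimality of the quotient $\PSL(2^k,\F)$, I would exploit its algebraic simplicity together with total minimality. The relevant general principle (essentially from \cite{DPS89} and the theory of the Total Minimality Criterion, Fact \ref{fact:tmc}) is that a center-free totally minimal group all of whose quotients behave well is perfectly minimal; more directly, I expect to cite the theorem that a totally minimal, algebraically simple (or at least center-free with no proper dense normal subgroups) topological group is perfectly minimal. Since $\PSL(2^k,\F)$ is simple as an abstract group and totally minimal (one can deduce total minimality of $\PSL(2^k,\F)$ from the minimality of $\SL(2^k,\F)$ established via Theorem \ref{thm:fermat}/Proposition \ref{ex:slnotmin}, or directly for the completion as in Theorem \ref{thm:pofsl}), it follows that it is perfectly minimal. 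I would then lift this through the finite central extension to conclude that $\SL(2^k,\F)$ is perfectly minimal.

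For the $\STP(2^k,\F)$ statement, I would run the parallel argument. By Lemma \ref{lemma:samec} we have $Z(\STP(2^k,\F))=Z(\SL(2^k,\F))$, which is finite, and the quotient $\STP(2^k,\F)/Z(\STP(2^k,\F))$ is center-free. Moreover Theorem \ref{thm:minifmin} gives that $\STP(2^k,\F)$ is minimal exactly when $\SL(2^k,\F)$ is, and the argument in Theorem \ref{thm:pofsl}(2) shows the quotient $\STP(2^k,\F)/Z(\STP(2^k,\F))$ is minimal and center-free. Using the same compact-central-extension lifting result together with perfect minimality of the center-free quotient, I would obtain perfect minimality of $\STP(2^k,\F)$.

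The main obstacle I anticipate is pinning down precisely which quotient-level result delivers perfect minimality for the center-free quotients, since perfect minimality is not automatically inherited by or from quotients in the way plain minimality sometimes is. The crucial lemma to locate (or prove) is the statement that if $G$ contains a compact central subgroup $Z$ with $G/Z$ perfectly minimal and $G$ minimal, then $G$ is perfectly minimal; this is the type of three-space/extension result that appears in \cite{DPS89} and \cite{St1}. Establishing the perfect minimality of the algebraically simple quotient $\PSL(2^k,\F)$ itself — rather than merely its minimality or total minimality — is the genuinely load-bearing step, and I expect it to rely on the fact that a simple totally minimal group has no room for a coarser topology to survive multiplication by an arbitrary minimal factor, which is exactly what perfect minimality demands.
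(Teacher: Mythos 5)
Your proposal is correct in its overall architecture, but it takes a genuinely different and more roundabout route than the paper. The paper's proof is three lines: $\SL(2^k,\F)$ is minimal by \cite[Corollary 5.2]{MS}, its center is finite and hence perfectly minimal \cite{Doitch}, and then \cite[Theorem 1.4]{MEG95} --- a minimal group whose \emph{center} is perfectly minimal is perfectly minimal --- finishes the job; the $\STP$ case is identical, using Lemma \ref{lemma:samec} and Theorem \ref{thm:minifmin}. You instead work with the \emph{quotient} by the center: you want perfect minimality of $\PSL(2^k,\F)$ and then a lifting lemma through the compact (finite) central extension. Both of your deferred lemmas are in fact true and derivable from the paper's own toolkit: the lifting lemma follows from \cite[Theorem 7.3.1]{DPS89} (used in Theorem \ref{thm:pofsl}), since for any minimal $H$ the quotient $(G\times H)/(Z\times\{e\})\cong (G/Z)\times H$ is minimal and $Z\times\{e\}$ is compact normal; and perfect minimality of the center-free minimal group $\PSL(2^k,\F)$ is exactly \cite[Theorem 1.4]{MEG95} applied with trivial center. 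Two caveats on your formulation. First, your emphasis on \emph{total} minimality is a red herring: total minimality alone does not imply perfect minimality, and what actually carries your argument is center-freeness plus minimality, i.e.\ the same Megrelishvili theorem the paper uses. Second, your route incurs extra obligations the paper never faces --- you must first establish that $\PSL(2^k,\F)$ (with the quotient topology) is minimal when $\F$ is a possibly non-complete subfield of a local field, which again needs \cite[Theorem 7.3.1]{DPS89} or an essentiality argument via simplicity of $\PSL(2^k,\widehat{\F})$. The ironic upshot is that the theorem closing the gaps in your proof, applied directly to $\SL(2^k,\F)$ itself rather than to its quotient, collapses the entire $\PSL$ detour into the paper's one-step argument; what your approach buys in exchange is a template that would still function in situations where the center is not perfectly minimal but the central quotient is.
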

\begin{proof}
Let $\F$ be a subfield of a local field and $k\in \N.$ By \cite[Corollary 5.2]{MS}, $\SL(2^k,\F)$ is minimal.
Being finite, the center $Z(\SL(2^k,\F))$ is perfectly minimal (see  \cite{Doitch}). Having perfectly minimal center,  the minimal group $\SL(2^k,\F)$ is perfectly minimal in view of \cite[Theorem 1.4]{MEG95}. The last assertion is proved similarly, taking into account that $Z(\SL(2^k,\F))=Z(\STP(2^k,\F))$ and the fact that $\STP(2^k,\F)$ is minimal by Theorem \ref{thm:minifmin}.
\end{proof}

	\begin{theorem}\label{prop:nminprod}
		Let $(n_k)_{k\in \N}$ be an increasing  sequence of natural numbers.
		Then, neither $\prod_{k\in \N}\SL(2^{n_k},\Q(i))$ nor  $\prod_{k\in \N}\STP(2^{n_k},\Q(i))$ are minimal.
	\end{theorem}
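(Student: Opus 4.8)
The plan is to show that each infinite product fails the Minimality Criterion (Fact~\ref{Crit}) by exhibiting a strictly coarser group topology, equivalently by finding a non-trivial closed normal subgroup of the completion that meets the dense subgroup trivially. First I would identify the completion of $\Q(i)$ inside $\C$, which is $\C$ itself, so that the relevant ambient groups are $\prod_{k}\SL(2^{n_k},\C)$ and $\prod_{k}\STP(2^{n_k},\C)$. The dense subgroup $G:=\prod_{k}\SL(2^{n_k},\Q(i))$ sits inside $\widehat G:=\prod_{k}\SL(2^{n_k},\C)$, and by Proposition~\ref{pro:minsl} the obstruction to minimality lives in the centers: for each $k$ the center $Z(\SL(2^{n_k},\C))=\{\lambda I:\lambda^{2^{n_k}}=1\}$ contains roots of unity, and the key arithmetic fact is that as $n_k\to\infty$ these cyclic groups $\mu_{2^{n_k}}$ grow without bound, whereas the roots of unity lying in $\Q(i)$ form only the finite group $\mu_4=\langle i\rangle$.

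The heart of the argument is a diagonal-type construction in the product of centers. The product of the centers $Z:=\prod_{k}\mu_{2^{n_k}}$ is a closed central (hence normal) subgroup of $\widehat G$, and I would locate inside $Z$ a non-trivial closed subgroup $L$ with $L\cap G=\{e\}$. Concretely, for each $k$ choose a primitive $2^{n_k}$-th root of unity $\zeta_k=\rho_{2^{n_k}}$; since $n_k$ increases, for all large $k$ we have $2^{n_k}>4$, so no non-trivial power $\zeta_k^j$ with the relevant constraint lies in $\Q(i)$ except possibly powers landing in $\mu_4$. The subtlety is that $\langle\zeta_k\rangle\cap\Q(i)=\mu_4$ is non-trivial for each \emph{individual} factor (which is exactly why each single $\SL(2^{n_k},\Q(i))$ is minimal), so one cannot simply take a product of single-factor subgroups. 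Instead I would build $L$ as a suitable closed ``diagonal'' or inverse-limit subgroup of $Z$ whose projections to the individual factors are compatible under the maps $\mu_{2^{n_{k+1}}}\to\mu_{2^{n_k}}$, arranged so that every non-identity element has at least one coordinate that is a primitive $2^{n_k}$-th root of unity with $2^{n_k}>4$, forcing that coordinate---and hence the whole element---out of $G$.

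The main obstacle, and the step requiring the most care, is precisely engineering $L$ so that $L\cap G=\{e\}$ while $L$ is a genuinely non-trivial \emph{closed} subgroup of the compact group $Z$. The natural candidate is the closed subgroup generated by a single element $\zeta=(\zeta_k)_k$ with each $\zeta_k$ a primitive $2^{n_k}$-th root of unity: its closure is a solenoid-like procyclic group, and I would verify that an element $(\zeta_k^{m})_k$ (for $m$ ranging over the profinite completion acting coordinatewise) lies in $G$ only when \emph{every} coordinate $\zeta_k^{m}\in\mu_4$, which by a divisibility/compatibility argument forces $m$ to kill all but finitely many coordinates and ultimately forces the element to be trivial. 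Once $L\cap G=\{e\}$ is established, Fact~\ref{Crit} gives non-minimality of $G$ immediately, and the $\STP$ case follows verbatim using $Z(\STP(2^{n_k},\F))=Z(\SL(2^{n_k},\F))$ from Lemma~\ref{lemma:samec} together with the minimality of each $\STP(2^{n_k},\widehat\F)$, so that the \emph{same} central subgroup $L$ witnesses the failure of essentiality. I would conclude by noting the contrast with Theorem~\ref{thm:pofsl}: there the \emph{full} product over all $n$ forces the center to be compact and the quotient argument through $\prod\PSL$ succeeds, whereas here restricting to a sparse subsequence of powers of two breaks the compactness/essentiality balance.
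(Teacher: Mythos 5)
Your proposal is correct and follows essentially the same route as the paper: both exhibit a non-trivial closed central ``solenoid-like'' subgroup of $\prod_k\SL(2^{n_k},\C)$ built from scalar matrices of strictly growing $2$-power orders, show it meets the $\Q(i)$-points trivially because $\Q(i)$ contains only the four roots of unity $\pm1,\pm i$, invoke the Minimality Criterion (Fact~\ref{Crit}), and transfer the same witness to the $\STP$ case via Lemma~\ref{lemma:samec}. The only cosmetic difference is that the paper takes the subgroup $N$ of all square-compatible sequences $(\lambda_{k+1}^2=\lambda_k)$ and derives a contradiction from four distinct non-trivial roots of unity in $\Q(i)$, whereas you take the closure of the procyclic group generated by one compatible sequence of primitive roots and run a $2$-adic divisibility argument (which indeed forces $m=0$, hence \emph{all} coordinates trivial, not merely ``all but finitely many'').
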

	\begin{proof}
We  first prove that	$G=\prod_{k\in \N}\SL(2^{n_k},\Q(i))$ is not minimal. In view of the Minimality Criterion, it suffices to show that $G$ is not essential in  $\widehat G=\prod_{k\in \N}\SL(2^{n_k},\C).$  
		To this aim, let \[N=\{ (\lambda_kI_{2^{n_k}})_{k\in \N}\in \widehat G | \ (\lambda_{k+1})^2=\lambda_{k} \ \forall k\in \N \}.\] 
	
		The equality $\lambda_{k+1}^2=\lambda_{k}$ implies  that  $N$ is a closed central subgroup of $\widehat G.$ Moreover, $N$ is non-trivial as the sequence $(n_k)_{k\in \N}$ is increasing.   Let us see that $N$ trivially intersects $G.$ Otherwise, there exists a sequence $(\lambda_k)_{k\in \N}$ of roots of unity in $\Q(i)$ such that  $(\lambda_{k+1})^2=\lambda_{k}$ for every $k\in \N$  and $\lambda_{k_0}\neq 1$ for some $k_0\in \N.$ It follows that
		$\lambda_{k_0},\lambda_{k_0+1},\lambda_{k_0+2},\lambda_{k_0+3}$ are  different non-trivial roots of unity in $\Q(i),$ contradicting the fact that $\pm 1,\pm i$ are the only roots of unity in $\Q(i).$ 
		
	Now consider the group	$H=\prod_{k\in \N}\STP(2^{n_k},\Q(i))$. In view of Lemma \ref{lemma:samec} and what we have just proved, $N$ is also a closed non-trivial central subgroup of $\widehat H$  that trivially intersects $H.$ This means that $H$ is not essential in $\widehat H$. By the Minimality Criterion, $H$ is not minimal.
	\end{proof}
\newpage
\begin{theorem}\label{thm:main}\
\begin{enumerate}	
	\item Let $\mathcal F_\pi$ and  $\mathcal F_c $ be the set  of Fermat primes and the set of  composite Fermat numbers, respectively, and let  $\mathcal A\in\{ \mathcal F_\pi, \mathcal F_c\}.$ Then the following conditions are equivalent: \begin{enumerate}  \item $\mathcal A$   is finite;
\item	$\prod_{F_n\in \mathcal A}\SL(F_n-1,\Q(i))$ is minimal;
\item  $\prod_{F_n\in \mathcal A}\STP(F_n-1,\Q(i))$ is minimal.
\end{enumerate} 
	\item  Let $\mathcal M_\pi$ and  $\mathcal M_c $ be the set  of   Mersenne primes and the set of  composite Mersenne numbers, respectively, and let  $\mathcal B\in\{ \mathcal M_\pi, \mathcal M_c\}.$
	Then the following conditions are equivalent: \ben \item $\mathcal B$   is finite; 
	\item $\prod_{M_p\in \mathcal B}\SL(M_p+1,\Q(i))$ is minimal;
	\item $\prod_{M_p\in \mathcal B}\STP(M_p+1,\Q(i))$ is minimal.
	\een 
\end{enumerate}
\end{theorem}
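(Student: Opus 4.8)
The plan is to reduce the theorem to the tools already developed, exploiting the arithmetic facts that a Fermat prime satisfies $F_n-1=2^{2^n}$ and a Mersenne prime satisfies $M_p+1=2^p$, whereas \emph{composite} Fermat/Mersenne numbers give degrees that are \emph{not} powers of $2$. The key observation is that by \cite[Corollary 5.3]{MS} (recalled before Theorem~\ref{thm:charofmer}), $\SL(n,\Q(i))$ is minimal precisely when $n=2^k$, and by Theorem~\ref{thm:minifmin} the same holds for $\STP(n,\Q(i))$ since $\mathrm{char}(\C)=0$. Thus for $\mathcal A=\mathcal F_\pi$ the individual factors $\SL(F_n-1,\Q(i))=\SL(2^{2^n},\Q(i))$ all have degree a power of $2$, and likewise for $\mathcal B=\mathcal M_\pi$ the factors have degree $2^p$. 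For the composite cases I would first argue that each single factor $\SL(F_n-1,\Q(i))$ with $F_n$ composite is \emph{not} minimal (since $F_n-1=2^{2^n}$ is still a power of two, one must be more careful here --- see the remark below), so the whole product cannot be minimal unless the index set is finite.

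First I would treat the four equivalences uniformly as two implications: $(a)\Rightarrow(b)\wedge(c)$ and $\neg(a)\Rightarrow\neg(b)\wedge\neg(c)$. For $(a)\Rightarrow(b),(c)$: if $\mathcal A$ is finite, the product is a \emph{finite} product of groups each of which is perfectly minimal by Proposition~\ref{prop:p2isper} (every factor has degree a power of $2$, both in the Fermat and Mersenne cases, and in both the $\SL$ and $\STP$ versions). A finite product of perfectly minimal groups is minimal, indeed perfectly minimal, so $(b)$ and $(c)$ follow. The genuinely infinite-product direction is the heart of the matter: I would show that if $\mathcal A$ is infinite then the corresponding product fails to be minimal. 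Here the degrees $2^{2^n}$ (for Fermat primes) or $2^p$ (for Mersenne primes) range over an infinite increasing set of powers of $2$, so Theorem~\ref{prop:nminprod} applies directly and gives non-minimality of both $\prod\SL(2^{n_k},\Q(i))$ and $\prod\STP(2^{n_k},\Q(i))$.

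The main obstacle, which I would flag explicitly, is the \emph{composite} case, where the degree is $F_n-1=2^{2^n}$ or $M_p+1$ with the underlying number composite. For composite Fermat numbers the degree $2^{2^n}$ is still a power of $2$, so each factor is individually minimal, and the argument is again via Theorem~\ref{prop:nminprod} with $n_k=2^{n}$ ranging over an infinite set of exponents when $\mathcal F_c$ is infinite; finiteness of $\mathcal F_c$ gives a finite product of perfectly minimal groups. For composite Mersenne numbers $M_p=2^p-1$ with $p$ prime, the degree is $M_p+1=2^p$, again a power of $2$, and the same dichotomy applies with $n_k=p$ over the relevant primes. In each subcase I would extract from an infinite $\mathcal A$ (resp.\ $\mathcal B$) a strictly increasing sequence of exponents $(n_k)_{k\in\N}$ of $2$, observe that the product over $\mathcal A$ contains $\prod_{k}\SL(2^{n_k},\Q(i))$ as a direct factor (equivalently, is topologically isomorphic to a product with these among its factors), and invoke Theorem~\ref{prop:nminprod} to deny essentiality in the completion, whence non-minimality by the Minimality Criterion (Fact~\ref{Crit}).

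Finally I would assemble the pieces: $(a)\Leftrightarrow(b)$ and $(a)\Leftrightarrow(c)$ each follow from the two directions above, and the three conditions are thereby mutually equivalent. The only care needed is to make sure that the factors indexed by $\mathcal A$ all have power-of-two degree so that Proposition~\ref{prop:p2isper} and Theorem~\ref{prop:nminprod} apply without exception; this is precisely the arithmetic content that $F_n-1$ and $M_p+1$ are always powers of $2$. I expect the write-up to be short, since the entire number-theoretic subtlety is absorbed into the observation that minimality of $\SL(n,\Q(i))$ is controlled by $n$ being a power of $2$, together with the already-proved behaviour of finite and infinite products.
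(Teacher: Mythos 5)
Your proposal is correct and follows essentially the same route as the paper's proof: the finite case is handled via Proposition~\ref{prop:p2isper} together with the fact that a finite product of perfectly minimal groups is minimal, and the infinite case via Theorem~\ref{prop:nminprod}, the arithmetic input being that $F_n-1=2^{2^n}$ and $M_p+1=2^p$ are powers of two whether or not $F_n$, $M_p$ are prime. The false start in your first paragraph (suggesting that factors coming from composite Fermat numbers are non-minimal) is correctly retracted in your third paragraph; note also that when $\mathcal A$ (or $\mathcal B$) is infinite, the entire product is already literally of the form $\prod_{k}\SL(2^{n_k},\Q(i))$ with $(n_k)$ increasing, so Theorem~\ref{prop:nminprod} applies directly and the ``direct factor'' detour is unnecessary.
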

\begin{proof}
(1) Assume first that $\mathcal A$ is finite. Note that $F_n-1$ is a power of two for every Fermat number $F_n\in \mathcal A.$ It is easy to see that a product of finitely many perfectly minimal groups is minimal. Therefore, both topological groups $\prod_{F_n\in \mathcal A}\SL(F_n-1,\Q(i))$
and $\prod_{F_n\in \mathcal A}\STP(F_n-1,\Q(i))$ are minimal, in view of Proposition \ref{prop:p2isper}.
If $\mathcal A$ is infinite, then  $\prod_{F_n\in \mathcal A}\SL(F_n-1,\Q(i))$ and  $\prod_{F_n\in \mathcal A}\STP(F_n-1,\Q(i))$ are not minimal by Theorem \ref{prop:nminprod}. \\
(2) The proof is similar to (1). The only difference is that for every prime $p$  it holds that $M_p+1$ is a power of two.
\end{proof}
	\bibliographystyle{plain}

\begin{thebibliography}{10}
		 
		
		
		\bibitem{BG}
	U. Bader, T. Gelander, \emph{Equicontinuous actions of semisimple groups}, Groups, Geometry and Dynamics \textbf{11} (2017), 1003--1039.  
	
		
		
		\bibitem{B} B. Banaschewski, \textit{Minimal topological algebras}, Math. Ann. \textbf{211} (1974), 107--114.
		
	
		
		
	\bibitem{DP} D. Dikranjan,  
		Iv. Prodanov, \textit{Totally minimal topological groups}, Annuaire Univ. Sofia Fat. Math.
		M\'{e}c. \textbf{69} (1974/75), 5--11. 
		
		
		\bibitem{DPS89} D. Dikranjan, Iv. Prodanov,  L. Stoyanov, \emph{Topological groups: characters, dualities and minimal group
			topologies,} Pure and Appl. Math. \textbf{130}, Marcel Dekker, New York-Basel, 1989.
		
		\bibitem{Doitch} D. Do\"{\i}tchinov, \textit{Produits de groupes topologiques minimaux}, Bull. Sci. Math.  \textbf{97} (2)  (1972), 59--64.
		
		\bibitem{G} R.K Guy, \textit{Unsolved Problems in Number Theory}, Problem Books in Math, 3rd ed. New York: Springer-Verlag, 2004.
		 \bibitem{MA} G.A. Margulis, \emph{Discrete Subgroups of Semisimple Lie Groups}, Springer-Verlag, 1990.
		
		\bibitem{MEG95}	M. Megrelishvili, \emph{Group representations and construction of minimal topological groups}, 
		Topology Appl. \textbf{62} (1995), 1--19. 
		
		
		\bibitem{MS} 	M. Megrelishvili, M. Shlossberg, \textit{Minimality of topological matrix groups and Fermat primes}, Topol.
		Appl. \textbf{322} (2022), doi:10.1016/j.topol.2022.108272.
		
		\bibitem{P}
		Iv. Prodanov,
		\textit{Precompact minimal group topologies and $p$-adic numbers},
		Annuaire Univ. Sofia Fac. Math. M\' ec.  \textbf{66} (1971/72), 249--266.
%
		
		\bibitem{RO} D.J.S. Robinson, \emph{A Course in the Theory of Groups}, 2nd edn., Springer-Verlag, New York, 1996.
		
	
		
		\bibitem{S71}
		R.M. Stephenson, Jr., \textit{Minimal topological groups}, Math. Ann. \textbf{192} (1971), 193--195.
		\bibitem{St1} L. Stoyanov, \textit{On products of minimal and totally minimal groups}, Proc. 11\textsuperscript{th} Conf. of the Union of Bulg. Math., Sunny Beach 1982, 287--294.
		
	\bibitem{SU} M. Suzuki, \textit{Group theory I}, Springer, Berlin-Heidelberg-New York, 1982.
	\bibitem{XDSD} W. Xi, D. Dikranjan, M. Shlossberg, D. Toller, 
\emph{Hereditarily minimal topological groups}, 
Forum Math. 
\textbf{31:3} (2019), 619--646.
	\end{thebibliography}

\end{document}